\begin{document}

\title[On radial positive normalized solutions of NLS]
        {On radial positive normalized solutions of the Nonlinear Schrödinger equation in an annulus}
\author{Jian Liang$^{\ast}$}
\address[Jian Liang, Linjie Song]{Institute of Mathematics, AMSS\\ 
        Academia Sinica\\ 
        Beijing 100190, China}
\address[Jian Liang, Linjie Song]{University of Chinese Academy of Science\\
        Beijing 100049, China}
\email[Corresponding author]{liangjian2020@amss.ac.cn}
\thanks{$^{\ast}$ Corresponding author}

\author{Linjie Song}
\email{songlinjie18@mails.ucas.edu.cn}
\thanks{Linjie Song is supported by CEMS}

\keywords{Normalized solutions, orbital stability, nonlinear Schrödinger equations, annulus}
\subjclass[2020]{35A15, 35B35, 35B38, 35J20, 35J10, 35Q55, 35C08}

\theoremstyle{plain}
\newtheorem{theorem}{Theorem}[section]
\newtheorem{lemma}[theorem]{Lemma}
\newtheorem{corollary}[theorem]{Corollary}
\newtheorem{proposition}[theorem]{Proposition}

\theoremstyle{remark}
\newtheorem{remark}[theorem]{Remark}
\newtheorem*{notation}{Notation}

\numberwithin{equation}{section}

\bibliographystyle{amsplain}

    \begin{abstract}
        We are interested in the following semilinear elliptic problem:
        \begin{equation*} 
            \begin{cases}
            -\Delta u + \lambda u = u^{p-1}, x \in T, \\
            u > 0, u = 0 \ \text{on} \ \partial T, \\
            \int_{T}u^{2} \, dx= c
            \end{cases}
        \end{equation*}
        where $T = \{x \in \mathbb{R}^{N}: 1 < |x| < 2\}$ is an annulus in $\mathbb{R}^{N}$, $N \geq 2$, $p > 1$ is Sobolev-subcritical, searching for
        conditions (about $c$, $N$ and $p$) for the existence of positive radial solutions. We analyze the asymptotic behavior of $c$ as $\lambda \rightarrow +\infty$ and
        $\lambda \rightarrow -\lambda_1$ to get the existence, non-existence and multiplicity of normalized solutions. Additionally, based on the properties of these solutions,
        we extend the results obtained in Pierotti et al. in Calculus of Variations and Partial Differential Equations, 2017, 56: 1-27. In contrast of the earlier results, a positive radial solution with arbitrarily large mass
        can be obtained when $N \geq 3$ or if $N = 2$ and $p < 6$. Our paper also includes the demonstration of orbital stability/instability results.
    \end{abstract}
    
    \maketitle

\section{Introduction}
    Consider the following problem:
    \begin{equation} \label{NLS}
        \begin{cases}
        -\Delta u + \lambda u = |u|^{p-2}u, \  x \in T, \\
        u = 0 \ \text{on} \ \partial T.
        \end{cases}
    \end{equation}
    where $T = \{x \in \mathbb{R}^{N}: 1 < |x| < 2\}$ is an annulus in $\mathbb{R}^{N}$, $N \geq 2$, $2 < p < 2^{\ast}$ where
    $2^{\ast} := \frac{2N}{N-2}$ if $N \geq 3$ and $2^{\ast} := +\infty$ if $N = 2$.

    Solutions of \eqref{NLS} come from establishing the existence of standing wave solutions  $\Phi(x,t) = e^{i\lambda t} u(x)$ of the following nonlinear Schrödinger equation (NLSE):
    \begin{equation}\label{NLSE}
        \begin{cases}
            i\frac{\partial \Phi}{\partial t} + \Delta \Phi + |\Phi|^{p-2}\Phi = 0,  (t,x) \in  \mathbb{R} \times T \\
            \Phi(0,x) = \Phi_0(x) 
        \end{cases}
    \end{equation}

    When seeking solutions to \eqref{NLS}, two methods are used: the first involves fixing parameter $\lambda$ and subsequently searching for solutions as critical points of the \textit{action} functional 
    \begin{equation*}
    \Phi_{\lambda}(u) := \frac{1}{2}\int_{T} (|\nabla u|^{2} + \lambda u^{2}) - \frac{1}{p}\int_{T}|u|^{p}.
    \end{equation*}

    An alternative method is to search for solutions with a prescribed mass, where the Lagrange multiplier $\lambda \in \mathbb{R}$ is unknown. The primary objective of this 
    paper is to prove the existence of positive \textit{normalized solutions} of \eqref{NLS}, which satisfy 
    \begin{equation} 
        \int_{T}u^{2} \, dx= c
    \end{equation}
    where $c > 0$ is prescribed. 

    The mass critical exponent, denoted by $p_{c} = 2 + \frac{4}{N}$, has played a crucial role in analyzing normalized solutions. It distinguishes two distinct regimes: the mass-subcritical regime for $2 < p < 2 + \frac{4}{N}$ and the mass-supercritical regime 
    for $2 + \frac{4}{N} < p < 2^{\ast}$. This exponent serves as a threshold for several dynamic properties, including the stability or instability of ground states and the question of global existence versus blow-up (see \cite{cazenave1982orbital} \cite{glassey1977blowing} \cite{berestycki1983nonlinear}).
    
    There are many results dealing with the normalized solutions on $\mathbb{R}^{N}$. For example, see \cite{jeanjean1997existence,bartsch2013normalized,bartsch2017natural,soave2020normalized,soave2020normalizedcritical,bieganowski2021normalized,bartsch2021normalized} and their references. For the normalized solutions on a bounded domain, 
    to our knowledge, the only papers are \cite{noris2015existence,pierotti2017normalized,noris2019normalized,hajaiej2022general,song2022existence}. 
    
    In \cite{noris2015existence}, the authors dealt with problem \eqref{NLS} in the spherical domain $\Omega = B_1$. It is shown that the solvability of \eqref{NLS} is 
    strongly influenced by the exponent $p$. In \cite{pierotti2017normalized}, the authors investigate problem \eqref{NLS} in the context of a general $C^1$ bounded domain 
    $\Omega \subset \mathbb{R}^{N}$, with a focus on finding solutions that have higher Morse index. It is shown that if we define 
    \begin{equation*}
        \mathfrak{A}_k = \mathfrak{A}_k(p, \Omega) = 
        \begin{Bmatrix}
            c > 0 : \eqref{NLS} \text{ admits a solution }u \text{ (for some $\lambda$)} \\
            \text{ having Morse index } m(u) \leq k
        \end{Bmatrix}
    \end{equation*}
    where 
    \begin{equation*}
            m(u) = \max 
            \begin{Bmatrix}
                k: \exists V \subset H_{0}^{1}(\Omega), \dim V = k, \forall v \in V \backslash \{0\}, \\
                  \int_{\Omega} |\nabla v|^2 + \lambda v^2 - p|u|^{p-1}v^2 \, dx < 0 
            \end{Bmatrix}
            \in \mathbb{N}
    \end{equation*}
    is the Morse index of $u$ in $H_{0}^{1}(\Omega)$. If we denote $\mu_k = \sup \mathfrak{A}_k (p, \Omega)$ then if $k$ is fixed, we have 
    \begin{equation*}
        \mu_k < +\infty \Longleftrightarrow p > 2+ \frac{4}{N}.
    \end{equation*}
    The paper provided a detailed blow-up analysis, supported by appropriate a priori pointwise estimates, to prove this result. Specifically, the analysis concerns a 
    sequence of solutions $\{u_n\}$ for \eqref{NLS}, where $\|u_n\|_{H_0^1(\Omega)} \rightarrow +\infty$ and there exists $\bar{k} \in \mathbb{N}$ such that $m(u_n) \leq \bar{k}$.
    As $n \rightarrow \infty$, the solution will blow up on a finite number of local maximas; this number is at most $\bar{k}$, and the points are far away from each other and 
    the boundary. Moreover, these solutions will also decay exponentially away from the blow-up points.

    As for positive solutions of \eqref{NLS}, the authors proved the following result:

    \medskip

    For every $0 < c < c^1 = c^1(\Omega, p)$ problem \eqref{NLS} admits a solution which is a local minima of the energy $E$ on $\mathcal{M}_c$ and has Morse index one. Where   
    \begin{equation*}
    E(u) = \frac{1}{2}\int_{T} |\nabla u|^{2} - \frac{1}{p}\int_{T}|u|^{p}
    \end{equation*}
    is the \textit{energy} functional and $\mathcal{M}_c = \{ u \in H_0^1(\Omega), \|u_n\|_{L^2}^2 = c\}$ is the $L^2$ constraint sphere.
    \begin{itemize}
        \item If $2 < p < 2 + \frac{4}{N}$,  $c^1(\Omega, p) = +\infty$;
        \item If $p = 2 + \frac{4}{N}$, $c^1(\Omega, p) > \| Z_{N,p}\|^2_{L^2(\mathbb{R}^N)}$;
        \item If $2 + \frac{4}{N} < p < 2^{\ast}$, $c^1(\Omega, p) > D_{N,p} \lambda_1(\Omega)^{\frac{2}{p-1}-\frac{N}{2}}$ 
        where $D_{N,p}$ is a universal constant.
    \end{itemize}

    In the case of a symmetric domain, any solution can be used as a building block to create other solutions with more complex behavior. This process leads to the creation of 
    the so-called 'necklace solitary waves' . In paper \cite{pierotti2017normalized}, the authors employed this method to demonstrate that if $\Omega = B$ is a ball in $\mathbb{R}^{N}$, then 
    \begin{equation*}
        p < 2 + \frac{4}{N-1}  \Longrightarrow \eqref{NLS} \text{ admits a solution for every }c > 0.
    \end{equation*}
    When $\Omega = R$ is a rectangle, the further restriction on $p$ can be removed. In fact, it means that $\mu_k \rightarrow +\infty$ as $k \rightarrow \infty$ in these cases.

    For an annular domain, we can focus on positive radial solutions of \eqref{NLS} due to the symmetry of $T$. In a series of work (\cite{hajaiej2022general,song2022properties,song2022existence,song2022threshold,song2022new,hajaiej2023strict}), the second author with his co-author developed a completely novel framework to prove the existence of normalized solutions in 
    various cases. Paper \cite{song2022threshold} specifically delved into problem $\eqref{NLS}$ in the domain outside of the ball $D = \{x \in \mathbb{R}^{N}: |x| > 1\}$ by analyzing the 
    asymptotic behaviors of $u$ in cases where $\lambda \rightarrow 0^+$ and $\lambda \rightarrow +\infty$. It is shown that
    \begin{enumerate}[leftmargin = 2em, itemindent = 0pt]
        \item If $N \geq 3$ and $2 + \frac{4}{N} \leq p < 2^{\ast}$  or if $N = 2$ and $4 < p < 6$, there exists $\eta_{1} > 0$ such that: 
        \begin{itemize}
            \item For any $c > \eta_{1}$, \eqref{NLS} admits at least two solutions $u_{\lambda}$ and $u_{\widetilde{\lambda}}$ in $H_{0,rad}^{1}(D)$ with $\lambda > \widetilde{\lambda} > 0$, s.t. $\| u_{\lambda}\|_{L^{2}(D)} = \| u_{\widetilde{\lambda}}\|_{L^{2}(D)} = c$;
            \item \eqref{NLS} admits at least one solution $u_{\lambda}$ in $H_{0,rad}^{1}(D)$ with some $\lambda > 0$, s.t. $\| u_{\lambda}\|_{L^{2}(D)}=\eta_{1}$;
            \item For any $c < \eta_{1}$, \eqref{NLS} admits no solution $u_{\lambda}$ in $H_{0,rad}^{1}(D)$, s.t. $\| u_{\lambda}\|_{L^{2}(D)} = c$.
        \end{itemize}
        \item If $N = 2$ and $p = 6$, there exists $\eta_{2} > 0$ such that:
        \begin{itemize}
            \item For any $c > \eta_{2}$, \eqref{NLS} admits at least one solution $u_{\lambda}$ in $H_{0,rad}^{1}(D)$ with some $\lambda > 0$, s.t. $\| u_{\lambda}\|_{L^{2}(D)} = c$;
            \item For any $c < \eta_{2}$, \eqref{NLS} admits no solution $u_{\lambda}$ in $H_{0,rad}^{1}(D)$, s.t. $\| u_{\lambda}\|_{L^{2}(D)} = c$.
        \end{itemize}
        \item If $N = 2$ and $p > 6$, for any $c > 0$, \eqref{NLS} admits at least one solution $u_{\lambda}$ in $H_{0,rad}^{1}(D)$ with some $\lambda > 0$, s.t. $\| u_{\lambda}\|_{L^{2}(D)} = c$.
        \item If $N = 2$ and $p = 4$, there exists $\eta_{3} > 0$ such that:
        \begin{itemize}
            \item For any $c > \eta_{3}$, \eqref{NLS} admits at least one solution $u_{\lambda}$ in $H_{0,rad}^{1}(D)$ with some $\lambda > 0$, s.t. $\| u_{\lambda}\|_{L^{2}(D)} = c$; 
            \item For any $c < \eta_{3}$, \eqref{NLS} admits no solution $u_{\lambda}$ in $H_{0,rad}^{1}(D)$, s.t. $\| u_{\lambda}\|_{L^{2}(D)} = c$.
        \end{itemize}
        \item If $2 < p < 2 + \frac{4}{N}$, for any $c > 0$, \eqref{NLS} admits at least one solution $u_{\lambda}$ in $H_{0,rad}^{1}(D)$ with some $\lambda > 0$, s.t. $\| u_{\lambda}\|_{L^{2}(D)} = c$.
    \end{enumerate}
    For problems on the unit ball or $\mathbb{R}^N$, radial solutions of \eqref{NLS} have a unique maximum point, whereas maximum points of radial solutions for \eqref{NLS} on 
    $D$ form a circle, resulting in the critical exponent $p = 6$. This method is also applicable to an annulus, and the result is similar to that of the exterior of a ball due 
    to the domains have identical topological structure. 

    For any $\lambda > -\lambda_1$, \cite[Theorem 1.2]{yadava1997uniqueness}, \cite[Theorem 1.1]{yao2019uniqueness}, \cite[Theorem 1.1]{ni1985uniqueness}, \cite[Theorem 1.1]{felmer2008uniqueness} showed that 
    \eqref{NLS} possesses a unique and non-degenerate solution $u_{\lambda}$ in $H_{0,rad}^{1}(T)$, the Sobolev space of radially symmetric functions.
    In fact, $u_{\lambda}$ is the ground state of \eqref{NLS}, i.e. $\Phi_{\lambda}(u_{\lambda}) = \inf_{u \in N_{\lambda}}\Phi_{\lambda}(u)$, where $N_{\lambda}$ is the Nehari manifold with a parameter $\lambda$ defined by
    \begin{equation*}
        N_{\lambda}:= \{u \in H_{0,rad}^{1}(T) \backslash \{0\}: \int_{T}| \nabla u |^{2} + \lambda u^{2} - |u|^{p} = 0\}.
    \end{equation*}

    So for \eqref{NLS} in an annular domain, this paper addresses two primary questions:
    \begin{itemize}
        \item \textbf{Is there a positive solution in the annulus that can have an arbitrarily large mass?}
        \item \textbf{Does $\mu_k \rightarrow +\infty$ as $k \rightarrow \infty$ in an annulus?}
    \end{itemize}
    
    To address these questions, we employ the techniques proposed in \cite{song2022threshold}, which lead to the main theorem of this paper as follows:

    \begin{theorem}\label{thm1.1} \hfill
        \begin{enumerate}[leftmargin = 2em, itemindent = 0pt]
            \item If $N \geq 3$ or if $N = 2$ and $p < 6$, for any $c > 0$, \eqref{NLS} admits at least one solution $u_{\lambda}$ in $H_{0,rad}^{1}(T)$ with some $\lambda > 0$ s.t. $\| u_{\lambda}\|_{L^{2}(T)} = c$.
            \item If $N = 2$ and $p = 6$, there exists $\eta_{1} > 0$ such that:
            \begin{itemize}
                \item For any $c < \eta_{1}$, \eqref{NLS} admits at least one solution $u_{\lambda}$ in $H_{0,rad}^{1}(T)$ with some $\lambda > 0$ s.t. $\| u_{\lambda}\|_{L^{2}(T)} = c$;
                \item For any $c > \eta_{1}$, \eqref{NLS} admits no solution $u_{\lambda}$ in $H_{0,rad}^{1}(T)$ s.t. $\| u_{\lambda}\|_{L^{2}(T)} = c$.
            \end{itemize}
            \item If $N = 2$ and $p > 6$, there exists $\eta_{2} > 0$ such that:
            \begin{itemize}
                \item For any $c < \eta_{2}$, \eqref{NLS} admits at least two solutions $u_{\lambda}$ and $u_{\widetilde{\lambda}}$ in $H_{0,rad}^{1}(T)$ with $\lambda < \widetilde{\lambda} > 0$ 
                s.t. $\| u_{\lambda}\|_{L^{2}(T)} = \| u_{\widetilde{\lambda}}\|_{L^{2}(T)} = c$;
                \item \eqref{NLS} admits at least one solution $u_{\lambda}$ in $H_{0,rad}^{1}(T)$ with some $\lambda > 0$ s.t. $\| u_{\lambda}\|_{L^{2}(T)}=\eta_{2}$;
                \item For any $c > \eta_{2}$, \eqref{NLS} admits no solution $u_{\lambda}$ in $H_{0,rad}^{1}(T)$ s.t. $\| u_{\lambda}\|_{L^{2}(T)} = c$.
            \end{itemize}
        \end{enumerate}
    \end{theorem}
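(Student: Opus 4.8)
The plan is to reduce Theorem~\ref{thm1.1} entirely to the study of the single real function
\[
    c(\lambda):=\|u_\lambda\|_{L^2(T)}^2,\qquad \lambda\in(-\lambda_1,+\infty),
\]
where $u_\lambda\in H_{0,rad}^1(T)$ is the unique positive radial solution of \eqref{NLS} supplied by the quoted uniqueness results and $\lambda_1$ is the first Dirichlet eigenvalue of $-\Delta$ on $T$ (with positive radial eigenfunction $\varphi_1$). A positive radial normalized solution of mass $c$ exists exactly when $c$ lies in the range of $c(\cdot)$, so the theorem follows once we know: (i) $c$ is continuous; (ii) $c(\lambda)\to 0$ as $\lambda\to-\lambda_1^+$; (iii) the precise behavior of $c(\lambda)$ as $\lambda\to+\infty$. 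For (i) --- in fact $C^1$-regularity of $\lambda\mapsto u_\lambda$, hence of $c$ --- I would apply the implicit function theorem to $F(\lambda,u)=-\Delta u+\lambda u-u^{p-1}$ on a space of radial Hölder functions, the invertibility of $D_uF(\lambda,u_\lambda)=-\Delta+\lambda-(p-1)u_\lambda^{p-2}=:L_\lambda$ being precisely the non-degeneracy of $u_\lambda$.

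For (ii) I would show $c(\lambda)\to 0$ with an explicit rate. Since $u_\lambda$ is the Nehari ground state, writing $\|u\|_\lambda^2:=\int_T(|\nabla u|^2+\lambda u^2)$ one has $\|u_\lambda\|_\lambda^2=\tfrac{2p}{p-2}\inf_{N_\lambda}\Phi_\lambda$; testing the infimum with the Nehari rescaling of $\varphi_1$ and using $\int_T|\nabla\varphi_1|^2=\lambda_1\int_T\varphi_1^2$ gives $\inf_{N_\lambda}\Phi_\lambda\lesssim(\lambda_1+\lambda)^{p/(p-2)}$. Combined with the coercivity bound $\|u_\lambda\|_\lambda^2\ge\tfrac{\lambda_1+\lambda}{\lambda_1}\|\nabla u_\lambda\|_{L^2}^2$ (valid for $\lambda\le0$, by Poincaré), this yields $\|\nabla u_\lambda\|_{L^2}^2\lesssim(\lambda_1+\lambda)^{2/(p-2)}\to0$, hence $c(\lambda)\le\lambda_1^{-1}\|\nabla u_\lambda\|_{L^2}^2\to0$; morally, $u_\lambda$ bifurcates from $0$ along $\varphi_1$.

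The heart of the matter is (iii). Setting $u_\lambda(x)=\lambda^{1/(p-2)}v_\lambda(\sqrt\lambda\,x)$, the rescaled $v_\lambda$ solve $-\Delta v+v=v^{p-1}$ on the large annulus $\sqrt\lambda\,T$, and I would show, via the blow-up scheme of \cite{pierotti2017normalized,song2022threshold}, that $u_\lambda$ concentrates on a single sphere $\{|x|=r_\lambda\}$ with $r_\lambda\in(1,2)$, the rescaled radial profile converging in $C^2_{loc}$ and $L^2$ to the unique even positive solution $w$ of $-w''+w=w^{p-1}$ on $\mathbb R$. Passing the mass to the limit then gives, with $\omega_{N-1}=|S^{N-1}|$,
\[
    c(\lambda)=\omega_{N-1}\,\|w\|_{L^2(\mathbb R)}^2\,r_\lambda^{\,N-1}\,\lambda^{\frac{2}{p-2}-\frac12}\,(1+o(1)),\qquad \lambda\to+\infty,
\]
where $r_\lambda^{\,N-1}$ stays trapped in $[1,2^{N-1}]$. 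Thus $c(\lambda)\to+\infty$ when $p<6$, $c(\lambda)$ is bounded and bounded away from $0$ when $p=6$, and $c(\lambda)\to0$ when $p>6$; and for $N\ge3$ Sobolev-subcriticality forces $p<2^\ast\le6$, so the first alternative always applies. This step is the main obstacle: it requires the a priori estimate $\|u_\lambda\|_\infty\asymp\lambda^{1/(p-2)}$, identification of the one-dimensional limit through uniqueness of the ODE ground state, \emph{ruling out boundary concentration} (e.g.\ by a Pohozaev identity forcing $\mathrm{dist}(r_\lambda,\{1,2\})\gg\lambda^{-1/2}$), and uniform exponential decay of $u_\lambda$ off the concentration sphere so that $\|\cdot\|_{L^2}$ may be passed to the limit.

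Assembling the cases: if $p<6$ (in particular whenever $N\ge3$), $c$ is continuous on $(-\lambda_1,\infty)$ with limits $0$ at the left end and $+\infty$ at the right end, so the intermediate value theorem makes $c$ onto $(0,\infty)$ --- this is part (1). If $N=2,\ p=6$, then $c$ is continuous, positive, vanishes as $\lambda\to-\lambda_1^+$ and is bounded; putting $\eta_1:=\sup_\lambda c(\lambda)\in(0,\infty)$, the connected range of $c$ contains $(0,\eta_1)$ and is disjoint from $(\eta_1,\infty)$ --- this is part (2). If $N=2,\ p>6$, then $c$ is continuous, positive and tends to $0$ at both ends, hence attains a maximum $\eta_2>0$ at some $\lambda^*$; for every $c<\eta_2$ the intermediate value theorem yields a preimage in $(-\lambda_1,\lambda^*)$ and another in $(\lambda^*,\infty)$, i.e.\ two solutions with $\lambda<\widetilde\lambda$, while $c=\eta_2$ gives one and $c>\eta_2$ gives none. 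To secure $\widetilde\lambda>0$ (and $\lambda^*>0$) I would also show that $c$ is increasing near $\lambda=0$; this I expect to follow from the identity $c'(\lambda)=-2\langle L_\lambda^{-1}u_\lambda,u_\lambda\rangle$ together with the Morse-index-one/local-minimizer structure of $u_\lambda$ there, the same sign information feeding the announced orbital stability (on the branch where $c'>0$) and instability (where $c'<0$).
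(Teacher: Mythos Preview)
Your proposal is correct and follows essentially the same route as the paper: reduce everything to the continuous function $\lambda\mapsto\|u_\lambda\|_{L^2}^2$ on $(-\lambda_1,\infty)$, show it vanishes as $\lambda\to-\lambda_1^+$ by bifurcation, pin down its behavior as $\lambda\to+\infty$ by rescaling the radial profile to the one-dimensional ground state of $-W''+W=W^{p-1}$ (with the a priori bound $\|u_\lambda\|_\infty^{p-2}\asymp\lambda$ and uniform exponential decay), and then assemble the three cases by the intermediate value theorem. The only substantive deviation is in how you exclude boundary concentration: rather than a Pohozaev identity, the paper quotes Dancer for $\bar r_\lambda\to 1$ and then shows $\sqrt{\lambda}\,(\bar r_\lambda-1)\to\infty$ by contradiction, since otherwise the rescaled limit would solve $-v''=v^{p-1}$ on a half-line with $v(0)=1$, which a short ODE argument forbids.
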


    Utilizing \cite[Corollary 2.5]{song2022properties}, it is known that $\{(\lambda,u_{\lambda}): \lambda > 0\}$ forms a continuous curve in $\mathbb{R} \times H_{0,rad}^{1}(T)$. To conclude the proof of Theorem \ref{thm1.1}, 
    the behavior of $\int_{T}u_{\lambda}^{2}$ will be analyzed as $\lambda$ approaches $+\infty$ and $-\lambda_1$.

    \begin{remark}
        As previously mentioned, $u_{\lambda}$ is the ground state of \eqref{NLS}, so it has Morse index one in $H_{0,rad}^{1}(T)$. \cite{esposito2007asymptotic} and \cite{sirakov2001symmetry} showed that $u_{\lambda}$ has a unique point
        (in fact, a sphere) $r_{\lambda}$ achieving the maximum and blows up when $\lambda \rightarrow +\infty$. However, the solutions obtained in \cite{pierotti2017normalized} have multiple peaks and hence cannot manifest a spherically symmetric 
        blow-up domain like $u_{\lambda}$. Thus, we may deduce that $u_{\lambda}$ must be distinct from the solutions identified in \cite{pierotti2017normalized}. Applying the blow-up analysis proposed in \cite{pierotti2017normalized} 
        (or \cite{esposito2011pointwise}), we infer that as $\lambda \rightarrow +\infty$, the Morse index of $u_{\lambda}$ in $H_{0}^{1}(T)$ approaches infinity. Therefore, we may conclude that in case (1), $\mu_k \rightarrow +\infty$ as $k \rightarrow \infty$.
    \end{remark}

    Our method yields the orbital stability/instability of standing wave solutions $e^{i\lambda t}u_{\lambda}(x)$ for \eqref{NLSE} as a byproduct. Recall that such 
    solutions are called orbitally stable if for each $\epsilon > 0$, there exists $\delta > 0$ such that, whenever $\Psi_{0} \in H_{0,rad}^{1}(T, \mathbb{C})$ is such that 
    $\|\Psi_{0} - u_{\lambda}\|_{H_{0}^{1}(T, \mathbb{C})} < \delta$ and $\Psi(t, x)$ is the solution of \eqref{NLSE} with $\Psi(0, \cdot) = \Psi_{0}$ in some interval $[0, t_{0})$, 
    then $\Psi(t, \cdot)$ can be continued to a solution in $0 \leq t < \infty$ and
    \begin{equation*}
        \sup_{0 < t < \infty}\inf_{s \in \mathbb{R}} \|\Psi(t, x) - e^{i\lambda s}u_{\lambda}(x)\|_{H_{0}^{1}(T, \mathbb{C})} < \epsilon;
    \end{equation*}
    otherwise, they are called unstable. We assume the following condition:

    $(LWP)$ For each $\Psi_{0} \in H_{0}^{1}(T, \mathbb{C})$, there exist $t_{0} > 0$, only depending on $\|\Psi_{0}\|_{H_{0}^{1}}$, and a unique solution $\Psi(t, x)$ of 
    \eqref{NLSE} with initial datum $\Psi_{0}$ in the interval $I = [0, t_{0})$.

    \begin{theorem}\label{thm1.2} \hfill
        \begin{enumerate}[leftmargin = 2em, itemindent = 0pt]
            \item If $N \geq 3 $ or if $N = 2$ and $p < 6$. Let $u_{\lambda}$ be given by Theorem \ref{thm1.1} (1), $\Psi(t, x) = e^{i\lambda t}u_{\lambda}(x)$. 
            Then for a.e. $c \in (0, +\infty)$, $\Psi$ is orbitally stable in $H_{0,rad}^{1}(T, \mathbb{C})$.
            \item If $N = 2$ and $p = 6$. Let $u_{\lambda}$ be given by Theorem \ref{thm1.1} (2), $\Psi(t, x) = e^{i\lambda t}u_{\lambda}(x)$. 
            Then for a.e. $c \in (0, \eta_1)$, $\Psi$ is orbitally stable in $H_{0,rad}^{1}(T, \mathbb{C})$.
            \item If $N = 2$ and $p < 6$. Let $u_{\lambda}$ and $u_{\widetilde{\lambda}}$ be given by Theorem \ref{thm1.1} (3), $\Psi(t, x) = e^{i\lambda t}u_{\lambda}(x)$, 
            $\widetilde{\Psi}(t, x) = e^{i\widetilde{\lambda} t}u_{\widetilde{\lambda}}(x)$. Then for a.e. $c \in (0, \eta_2)$, $\Psi$ is orbitally stable while $\widetilde{\Psi}$ is 
            orbitally unstable in $H_{0,rad}^{1}(T, \mathbb{C})$.      
        \end{enumerate}
    \end{theorem}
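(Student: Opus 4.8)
The plan is to combine the abstract theory of orbital stability and instability of standing waves (Grillakis--Shatah--Strauss), in the form adapted to a bounded domain and to the radial class (as in \cite{song2022threshold}), with a qualitative analysis of the mass function
\[
  m(\lambda):=\int_T u_\lambda^2\,dx ,\qquad \lambda\in(-\lambda_1,+\infty).
\]
Since $u_\lambda>0$ is the radial ground state of \eqref{NLS} and is non-degenerate in $H^1_{0,rad}(T)$, the self-adjoint operators $L_+=-\Delta+\lambda-(p-1)u_\lambda^{p-2}$ and $L_-=-\Delta+\lambda-u_\lambda^{p-2}$ on radial $L^2(T)$ satisfy: $L_+$ has exactly one negative eigenvalue and trivial kernel (Morse index one together with non-degeneracy), while $L_-\ge 0$ with $\ker L_-=\mathrm{span}\{u_\lambda\}$ (because $u_\lambda>0$ is a ground state eigenfunction of $L_-$). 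By the implicit function theorem and non-degeneracy, $\lambda\mapsto u_\lambda$ is a $C^1$ curve from $(-\lambda_1,+\infty)$ into $H^1_{0,rad}(T)$, so $d(\lambda):=E(u_\lambda)+\tfrac{\lambda}{2} m(\lambda)$ is $C^2$ with $d'(\lambda)=\tfrac{1}{2} m(\lambda)$ and $d''(\lambda)=\tfrac{1}{2} m'(\lambda)$ (the first identity because $u_\lambda$ is a critical point of the action). Since the radial class is preserved by the flow of \eqref{NLSE} (rotation invariance and uniqueness in $(LWP)$), the Grillakis--Shatah--Strauss theorem applies within $H^1_{0,rad}(T,\mathbb C)$ and gives that $e^{i\lambda t}u_\lambda$ is orbitally stable when $m'(\lambda)>0$ and orbitally unstable when $m'(\lambda)<0$. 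This is the only place where we use that $u_\lambda$ is the \emph{radial} ground state; it does not conflict with the Remark, since radial stability does not preclude a large Morse index in $H^1_0(T)$.

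Next I would study $m$. From the $C^1$ dependence, $m\in C^1(-\lambda_1,+\infty)$, and since $m$ is non-constant, by the one--dimensional Sard theorem its set of critical values $\mathcal N:=m(\{\lambda: m'(\lambda)=0\})$ is Lebesgue-null. The asymptotics established in the proof of Theorem \ref{thm1.1} give $m(\lambda)\to 0$ as $\lambda\to-\lambda_1^+$ in all three cases, and, as $\lambda\to+\infty$: $m(\lambda)\to+\infty$ in case (1), $\sup_\lambda m(\lambda)=\eta_1$ in case (2), and $m(\lambda)\to 0$ in case (3) (so that $\sup_\lambda m(\lambda)=\eta_2$ is attained). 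Fix $c\notin\mathcal N$, with $c>0$ in case (1), $c\in(0,\eta_1)$ in case (2), $c\in(0,\eta_2)$ in case (3). In each case $m$ is continuous, takes values $<c$ near $-\lambda_1$ and values $>c$ somewhere, so $m^{-1}(c)$ is a nonempty closed set bounded below; put $\lambda_c:=\min m^{-1}(c)$. An intermediate-value argument (using $m<c$ near $-\lambda_1$) forces $m\le c$ on $(-\lambda_1,\lambda_c]$, hence $m'(\lambda_c)\ge 0$, and then $c\notin\mathcal N$ gives $m'(\lambda_c)>0$; by the criterion above $e^{i\lambda_c t}u_{\lambda_c}$ is orbitally stable, which --- once $u_{\lambda_c}$ is identified with the solution produced by the intermediate-value argument in the proof of Theorem \ref{thm1.1} --- proves (1), (2) and the stable half of (3). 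For (3), since in addition $m\to 0<c$ as $\lambda\to+\infty$, the set $m^{-1}(c)$ is compact; put $\widetilde\lambda_c:=\max m^{-1}(c)$. A symmetric argument gives $m\le c$ on $[\widetilde\lambda_c,+\infty)$, hence $m'(\widetilde\lambda_c)\le 0$, and $c\notin\mathcal N$ gives $m'(\widetilde\lambda_c)<0$; moreover $\lambda_c<\widetilde\lambda_c$, since otherwise $m\le c$ everywhere, contradicting $\sup_\lambda m(\lambda)=\eta_2>c$. Hence $e^{i\widetilde\lambda_c t}u_{\widetilde\lambda_c}$ is orbitally unstable, completing (3).

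The main obstacle is the first step: making the Grillakis--Shatah--Strauss machinery rigorous here, i.e. verifying the spectral hypotheses on $L_\pm$ restricted to $H^1_{0,rad}(T)$ and transcribing the modulation/coercivity argument (for stability) and the eigenvalue-count argument (for instability) from $\mathbb R^N$ to the annulus with homogeneous Dirichlet conditions under $(LWP)$. The spectral input is precisely what the quoted uniqueness and non-degeneracy results, together with the positivity of $u_\lambda$, provide; the remaining ingredients --- the $C^1$ dependence of $u_\lambda$ on $\lambda$, the one--dimensional Sard theorem, and the elementary bookkeeping for $m$ --- are routine once the endpoint asymptotics from Theorem \ref{thm1.1} are available.
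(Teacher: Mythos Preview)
Your proposal is correct and follows essentially the same approach as the paper: establish the $C^1$ dependence of $u_\lambda$ on $\lambda$ via the implicit function theorem and non-degeneracy, invoke the Grillakis--Shatah--Strauss--type criterion (stability iff $m'(\lambda)>0$, which the paper packages as Proposition~4.2 citing \cite{song2022threshold}), apply Sard's theorem so that for a.e.\ $c$ every preimage is a regular point, and then select the smallest (and, in case (3), also the largest) element of $m^{-1}(c)$ to force the appropriate sign of $m'$. The only difference is cosmetic: you spell out the spectral verification of $L_\pm$ on $H^1_{0,rad}(T)$, whereas the paper simply quotes this from \cite{song2022threshold}.
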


    \begin{remark}
        If we replace the annulus domain with a domain possessing a "nontrivial topology", such as $\Omega \ \backslash \ \Omega'$ where $\Omega$ and $\Omega' \subset \Omega$ are both simple-connected domains.
        We believe that there is a solution which has a "spherical blow-up" property, i.e. when $\lambda \rightarrow +\infty$, the solution has a domain which is homotopic to a sphere, attains a maximum in a neighborhood and blows up.
        Therefore, we speculate that the conclusions presented in \eqref{thm1.1} and \eqref{thm1.2} hold for this scenario as well. However, we currently lack a methodology to address this problem.
    \end{remark}

    The paper is organized as follows: In Section 2, the study of the asymptotic properties when $\lambda \rightarrow +\infty$ is presented. Section 3 is dedicated to the analysis 
    of the asymptotic properties when $\lambda \rightarrow -\lambda_1$, and it includes the completion of the proofs of Theorem \ref{thm1.1}. Lastly, the orbital stability/instability 
    of standing wave solutions is discussed, and Theorem \ref{thm1.2} is demonstrated in Section 4.

    \begin{notation}
        We use the standard notation $\{\varphi_k\}_{k \geq 1}$ for a basis of eigenfunctions of the Dirichlet Laplacian on $H_0^1(T)$, orthogonal in $H_0^1(T)$ 
        and orthonormal in $L^2(T)$. Such functions are ordered in such a way that the corresponding eigenvalues $\lambda_k$ satisfy $0 < \lambda_1 < \lambda_2 \leq \lambda_3 \leq \cdots$.
    \end{notation}

\section{Asymptotic behaviors when $\lambda \rightarrow +\infty$}

    \begin{lemma} \label{lem2.1}
        Let $u_{\lambda} = u_{\lambda}(r)$ be the unique solution of \eqref{NLS} in $H_{0,rad}^{1}(T)$. 
        Then $u_{\lambda}(r)$ has a unique maximum point $\bar{r}_{\lambda}$ and $u_{\lambda}' > 0$ 
        in $(1,\bar{r}_{\lambda})$, $u_{\lambda}' < 0$ in $(\bar{r}_{\lambda},2)$.
    \end{lemma}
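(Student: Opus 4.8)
The plan is to work with the radial ODE
\[
-u'' - \frac{N-1}{r}u' + \lambda u = u^{p-1}, \qquad r \in (1,2),
\]
with boundary conditions $u(1)=u(2)=0$, and to argue purely via ODE/maximum-principle techniques, exploiting positivity of $u_\lambda$. First I would note that since $u_\lambda>0$ in the open annulus and vanishes on the boundary, by Hopf's lemma $u_\lambda'(1)>0$ and $u_\lambda'(2)<0$, so there is at least one interior critical point, and at any critical point $r_0$ we have $u_\lambda''(r_0) = \lambda u_\lambda(r_0) - u_\lambda(r_0)^{p-1} = u_\lambda(r_0)(\lambda - u_\lambda(r_0)^{p-2})$. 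The key structural observation is that the sign of $u_\lambda''$ at a critical point is determined by whether $u_\lambda(r_0)$ is above or below the threshold $\lambda^{1/(p-2)}$: a critical point with value above the threshold is a strict local minimum, and one with value below it is a strict local maximum. So I would show that no interior critical point can be a local minimum, which forces uniqueness of the critical point and that it is the maximum.

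The heart of the argument is the following. Suppose for contradiction there were two critical points. Since $u_\lambda$ rises from $0$ at $r=1$, the first critical point $r_1$ must be a local maximum, hence $u_\lambda(r_1) < \lambda^{1/(p-2)}$. If there is a second critical point $r_2 > r_1$, then somewhere between them $u_\lambda$ has a local minimum $r_m$, at which $u_\lambda(r_m) > \lambda^{1/(p-2)}$ (from the sign condition on $u_\lambda''$). But then $u_\lambda(r_m) > u_\lambda(r_1)$, contradicting that $r_1$ is a local max and $u_\lambda$ decreases immediately after $r_1$ (one needs $u_\lambda(r_m) \le u_\lambda(r_1)$ since $u_\lambda$ was decreasing on $(r_1, r_m)$). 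To make this rigorous I would argue along the lines: between a local max and the next local min, $u_\lambda$ is monotone decreasing, so the value at the local min is strictly less than at the local max; combined with the threshold inequalities this is a contradiction. Therefore there is exactly one interior critical point $\bar r_\lambda$, it is the global maximum, $u_\lambda' > 0$ on $(1,\bar r_\lambda)$ and $u_\lambda' < 0$ on $(\bar r_\lambda, 2)$.

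An alternative and perhaps cleaner route, which I would likely use to handle the monotonicity claim directly, is an energy/Pohozaev-type function on the interval. Define $E(r) = \tfrac12 u_\lambda'(r)^2 - \tfrac\lambda2 u_\lambda(r)^2 + \tfrac1p u_\lambda(r)^p$; then $E'(r) = -\frac{N-1}{r}u_\lambda'(r)^2 \le 0$, so $E$ is nonincreasing. At $r=1$, $E(1) = \tfrac12 u_\lambda'(1)^2 > 0$; at $r=2$, $E(2) = \tfrac12 u_\lambda'(2)^2 > 0$. At a critical point $r_0$, $E(r_0) = u_\lambda(r_0)^2(\tfrac1p u_\lambda(r_0)^{p-2} - \tfrac\lambda2)$. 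The monotonicity of $E$ and the boundary values constrain where critical points can lie and with what values, and combined with the second-derivative sign test above this pins down the unique critical point. I would present whichever of the two arguments is shortest.

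The main obstacle I anticipate is the bookkeeping needed to rule out a local minimum strictly: the second-derivative test only gives that a critical point where $u_\lambda$ exceeds the threshold is \emph{not} a local maximum, but to conclude it is a strict local minimum (so that the profile must dip and come back up, creating yet another critical point and an infinite regress, or a contradiction with the boundary) one must be slightly careful — in principle $u_\lambda''$ could vanish at an isolated critical point. This is handled by noting that if $u_\lambda'(r_0) = 0$ and $u_\lambda''(r_0)=0$ then $u_\lambda(r_0) = \lambda^{1/(p-2)}$ exactly, and then differentiating the equation shows $u_\lambda'''(r_0) = \frac{N-1}{r_0^2}u_\lambda'(r_0) - \ldots$, and one can rule this out or treat it as a degenerate case via the uniqueness/non-degeneracy of $u_\lambda$ already quoted from the literature. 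In fact, since $u_\lambda$ is known to be non-degenerate, one can also invoke that to exclude pathological critical points. The cleanest finish is: every interior critical point is either a strict local max or strict local min by the sign of $u_\lambda''$ (the degenerate case being excluded), a $C^1$ function on $[1,2]$ positive inside and zero at the endpoints with only strict local extrema in the interior and starting by increasing must have exactly one, a maximum.
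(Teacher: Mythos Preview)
Your primary argument contains a sign error that invalidates it. At a critical point $r_0$ the ODE gives
\[
u_\lambda''(r_0)=\lambda u_\lambda(r_0)-u_\lambda(r_0)^{p-1}=u_\lambda(r_0)\bigl(\lambda-u_\lambda(r_0)^{p-2}\bigr),
\]
so a critical point with $u_\lambda(r_0)^{p-2}>\lambda$ has $u_\lambda''(r_0)<0$ and is a strict local \emph{maximum}, while one below the threshold is a strict local \emph{minimum}. You have this reversed. With the correct signs your contradiction evaporates: the first critical point $r_1$ is a local max with $u_\lambda(r_1)^{p-2}\ge\lambda$, a subsequent local min $r_m$ has $u_\lambda(r_m)^{p-2}\le\lambda$, hence $u_\lambda(r_m)\le u_\lambda(r_1)$, which is perfectly consistent with $u_\lambda$ decreasing on $(r_1,r_m)$. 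The threshold test alone does not rule out multiple critical points. (Also note that writing $\lambda^{1/(p-2)}$ tacitly assumes $\lambda>0$, whereas the lemma is needed for all $\lambda>-\lambda_1$.)

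Your energy idea can be made to work, but you did not actually close it. Here is the missing step: with $E(r)=\tfrac12 u_\lambda'^2-\tfrac{\lambda}{2}u_\lambda^2+\tfrac1p u_\lambda^p$ one has $E'(r)=-\tfrac{N-1}{r}u_\lambda'^2\le0$, so $E$ is nonincreasing. If an interior local minimum $r_m$ existed, then $E(r_m)\ge E(2)=\tfrac12 u_\lambda'(2)^2>0$ by Hopf, i.e.\ $\tfrac1p u_\lambda(r_m)^{p-2}>\tfrac{\lambda}{2}$, hence $u_\lambda(r_m)^{p-2}>\tfrac{p\lambda}{2}$. On the other hand the (corrected) second-derivative test at a local minimum gives $u_\lambda(r_m)^{p-2}\le\lambda$. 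Together these force $\tfrac{p}{2}<1$, contradicting $p>2$. (For $\lambda\le0$ every critical point is automatically a strict local max, so the result is immediate.) This is the kind of ``combining the energy with the sign test'' you alluded to but did not carry out.

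The paper takes a different route: it establishes $u_\lambda'(1)>0$, lets $\bar r_\lambda$ be the first zero of $u_\lambda'$, and then invokes \cite[Theorem~2]{sirakov2001symmetry} applied to the boundary value problem on $(\bar r_\lambda,2)$ to conclude directly that $u_\lambda'<0$ there. So the paper outsources the monotonicity on the right of the maximum to an external symmetry/monotonicity result, whereas a self-contained ODE argument along the lines above would avoid that citation.
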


    \begin{proof}
        By a standard elliptic estimate, we can assert that $u_{\lambda}(r) \in C^{2}((1,2))$. Furthermore, the strong maximum principle implies that $u_{\lambda}'(1) > 0$. We may assume that 
        $u_{\lambda}'(\bar{r}_{\lambda}) = 0$, and $u_{\lambda}' > 0$ in $(1,\bar{r}_{\lambda})$. Subsequently, we observe that $u_{\lambda}$ satisfies the following equation:
        \begin{equation} \label{eqa2.1}
            \begin{gathered}
            -(u'' + \frac{N-1}{r}u') + \lambda u = u^{p-1} \ in \ (\bar{r}_{\lambda},2), \\ 
            u(\bar{r}_{\lambda}) = \alpha, u'(\bar{r}_{\lambda}) = 0, u(2) = 0.
            \end{gathered}
        \end{equation}
        where $\alpha = u_{\lambda}(\bar{r}_{\lambda})$. Referring to \cite[Theorem 2]{sirakov2001symmetry}, it follows that $u'(r) < 0$ in $(\bar{r}_{\lambda},2)$.
    \end{proof}

    According to the result in \cite{dancer1997some}, the unique solution attains its maximum value on a sphere, and the radius of that sphere tends to 1.

    \begin{lemma}\label{lem2.2} 
        Let $u_{\lambda}$ be the unique solution of \eqref{NLS} in $H_{0,rad}^{1}(T)$ with $\lambda \rightarrow +\infty$, then $\liminf_{\lambda \rightarrow +\infty}\bar{r}_{\lambda} = 1$.
    \end{lemma}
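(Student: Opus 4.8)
The plan is to argue by contradiction: suppose that $\liminf_{\lambda \to +\infty} \bar{r}_\lambda = \rho > 1$, so that along a subsequence $\lambda_n \to +\infty$ we have $\bar{r}_{\lambda_n} \to \rho$ with $1 < \rho \le 2$, and every $\bar{r}_{\lambda_n}$ bounded away from $1$. The idea is that on the interval $(1, \bar{r}_{\lambda_n})$ the solution $u_{\lambda_n}$ is increasing (by Lemma \ref{lem2.1}) and in particular must rise from $0$ at $r=1$ to its maximum value $\alpha_n = u_{\lambda_n}(\bar{r}_{\lambda_n})$ over an interval of length bounded below by $\rho - 1 - o(1)$. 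On such a fixed-length interval, the presence of the large term $\lambda_n u$ in the equation $-(u'' + \frac{N-1}{r}u') + \lambda_n u = u^{p-1}$ forces the solution to be small, because where $u$ is increasing and $u^{p-1} < \lambda_n u$ (i.e. $u < \lambda_n^{1/(p-2)}$) the operator sign gives concavity-type control; more precisely one compares $u_{\lambda_n}$ near $r = 1$ with a solution of the constant-coefficient one-dimensional problem $-w'' + \lambda_n w = 0$ with $w(1) = 0$, whose growth is governed by $\sinh(\sqrt{\lambda_n}(r-1))$, and conclude that either $u_{\lambda_n}$ cannot reach a substantial height within distance $\rho - 1$ of the inner boundary, or — the other alternative — its maximum value $\alpha_n$ blows up at least as fast as $\lambda_n^{1/(p-2)}$.

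The key steps, in order, are: (1) Fix a small $\delta > 0$ with $1 + \delta < \rho$, and work on $I_n = (1, 1+\delta)$, where for $n$ large $u_{\lambda_n}$ is increasing. (2) Set $m_n := u_{\lambda_n}(1+\delta)$. On the region where $u_{\lambda_n} \le (\lambda_n/2)^{1/(p-2)}$ we have $u_{\lambda_n}^{p-1} \le \frac{\lambda_n}{2} u_{\lambda_n}$, hence $-(u_{\lambda_n}'' + \frac{N-1}{r}u_{\lambda_n}') \le -\frac{\lambda_n}{2} u_{\lambda_n} < 0$, so $(r^{N-1} u_{\lambda_n}')' \ge \frac{\lambda_n}{2} r^{N-1} u_{\lambda_n} \ge 0$; integrating twice from $1$ and using a sub/supersolution comparison with the explicit one-dimensional exponential barrier gives $m_n \ge c(\delta) e^{c'\sqrt{\lambda_n}}$ unless $u_{\lambda_n}$ exits the region $\{u_{\lambda_n} \le (\lambda_n/2)^{1/(p-2)}\}$ inside $I_n$, which again forces $m_n \ge (\lambda_n/2)^{1/(p-2)}$. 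Either way $m_n \to +\infty$, in fact at least polynomially in $\lambda_n$. (3) Derive a contradiction with an a priori upper bound on $u_{\lambda_n}(1+\delta)$: since $1 + \delta < \rho \le \bar{r}_{\lambda_n}$ would force the \emph{maximum} $\alpha_n \ge m_n$ to blow up, I test the equation against $u_{\lambda_n}$ and use that $\Phi_{\lambda_n}(u_{\lambda_n})$ equals the ground-state (Nehari) level, together with the scaling/variational characterization, to control $\|u_{\lambda_n}\|_{L^\infty}$ or at least $\int_T u_{\lambda_n}^2$ — but more cleanly, I would instead invoke the known asymptotics that as $\lambda \to +\infty$ the ground state concentrates and $\|u_\lambda\|_\infty \sim \lambda^{1/(p-2)}$, combined with the fact (from Lemma \ref{lem2.1}) that $u_{\lambda_n}$ restricted to $(1, \bar{r}_{\lambda_n})$ solves a Dirichlet problem on an interval of length $\ge \rho - 1$; rescaling $s = \sqrt{\lambda_n}(r - 1)$ transforms this into a problem on an interval of length $\sqrt{\lambda_n}(\rho-1) \to \infty$, and the limiting profile would be a positive solution of $-v'' + v = v^{p-1}$ on a half-line with $v(0) = 0$ that is monotone increasing on an interval of infinite length — but the unique such solution on $\mathbb{R}$ decays at infinity, so it cannot stay monotone increasing all the way, giving the contradiction.

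I expect the main obstacle to be making the blow-up/rescaling argument in step (3) rigorous, specifically controlling the behavior of $u_{\lambda_n}$ \emph{uniformly} on the whole increasing interval $(1, \bar{r}_{\lambda_n})$ after the rescaling $s = \sqrt{\lambda_n}(r-1)$ — one needs the rescaled functions $v_n(s) = \lambda_n^{-1/(p-2)} u_{\lambda_n}(1 + s/\sqrt{\lambda_n})$ to converge (locally uniformly, via elliptic estimates) to a nontrivial limit $v$ solving $-v'' + v = v^{p-1}$ on $[0, \infty)$ with $v(0) = 0$ and $v$ increasing, which contradicts the fact that any positive bounded solution of this ODE must eventually decrease. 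The delicate points are: ruling out the trivial limit $v \equiv 0$ (handled by the lower bound $m_n \to \infty$ from step (2) after rescaling, i.e.\ $v_n(\sqrt{\lambda_n}\delta)$ stays bounded below — or rather the normalization must be chosen so that $\|v_n\|_\infty = 1$, using $\|u_{\lambda_n}\|_\infty \sim \lambda_n^{1/(p-2)}$), ensuring the $\frac{N-1}{r}u'$ term is negligible after rescaling (it carries a factor $\lambda_n^{-1/2}$, so it vanishes), and justifying that the maximum point $\bar{r}_{\lambda_n}$ in the rescaled variable $\sqrt{\lambda_n}(\bar{r}_{\lambda_n} - 1) \ge \sqrt{\lambda_n}(\rho - 1 - o(1)) \to \infty$, so the limit profile is genuinely monotone on all of $[0, \infty)$. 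Assembling these, the contradiction is that the half-line ground state of $-v'' + v = v^{p-1}$ is not monotone, which forces $\rho = 1$.
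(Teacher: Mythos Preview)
The paper does not prove Lemma~\ref{lem2.2} directly; it simply cites Dancer's 1997 paper on singularly perturbed problems on annuli for the fact that the radial ground state concentrates at the inner boundary. Your attempt to supply a self-contained proof via blow-up has a genuine gap.

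The problem is that both steps (2) and (3) tacitly assume control that is not available. In step~(2), the differential inequality $(r^{N-1}u_n')' \ge \tfrac{\lambda_n}{2} r^{N-1} u_n$ on the set $\{u_n \le (\lambda_n/2)^{1/(p-2)}\}$ does \emph{not} force $m_n = u_n(1+\delta)$ to be large: the exponential growth is multiplied by the initial slope $u_n'(1)$, and nothing prevents $u_n'(1)$ from being exponentially small in $\sqrt{\lambda_n}$. Indeed, if the solution concentrates near $\rho>1$ with the standard soliton profile and exponential tails of width $O(\lambda_n^{-1/2})$, then $u_n'(1)$ is of order $\alpha_n \sqrt{\lambda_n}\, e^{-c\sqrt{\lambda_n}(\rho-1)}$, and the growth $e^{c'\sqrt{\lambda_n}\,\delta}$ over $(1,1+\delta)$ with $\delta<\rho-1$ does not compensate. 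So $m_n\to 0$, not $\infty$. For the same reason, in step~(3) the rescaling $v_n(s)=\lambda_n^{-1/(p-2)}u_n(1+s/\sqrt{\lambda_n})$ around the boundary converges to the trivial limit $v\equiv 0$: the maximum of $v_n$ sits at $s_n=\sqrt{\lambda_n}(\bar r_n-1)\to\infty$, and at any fixed $s$ one has $v_n(s)\to 0$. No contradiction arises.

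More conceptually, the obstruction is this: if instead you rescale around the maximum $\bar r_n$ (as the paper does in Lemma~\ref{lem2.3} and Theorem~\ref{thm2.5}), the limit is the one-dimensional soliton \emph{regardless} of the value of $\rho=\lim\bar r_n$. Blow-up analysis alone cannot distinguish concentration at $\rho=1.2$ from concentration at $\rho=1$; the location of the concentration point is a next-order effect driven by the first-order term $\frac{N-1}{r}u'$, which vanishes in the leading-order rescaled limit. Dancer's argument uses the ground-state property: if the bump sat at $\rho>1$, a competitor obtained by shifting the profile toward the inner boundary would lie on the Nehari manifold with strictly smaller action, because the weight $r^{N-1}$ is smaller there. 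That energy comparison (or an equivalent refined ODE/Hamiltonian argument exploiting the monotonicity of $H(r)=\tfrac12(u')^2+\tfrac1p u^p-\tfrac{\lambda}{2}u^2$) is the missing ingredient.
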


    \begin{lemma}\label{lem2.3} 
        Let $u_{\lambda}$ be the unique solution of \eqref{NLS} in $H_{0,rad}^{1}(T)$. Then        
        \begin{equation}
        0 < \liminf_{\lambda \rightarrow +\infty}\frac{\lambda}{\|u_{\lambda}\|_{L^{\infty}}^{p-2}} \leq 
        \limsup_{\lambda \rightarrow +\infty}\frac{\lambda}{\|u_{\lambda}\|_{L^{\infty}}^{p-2}} \leq 1.
        \end{equation}
    \end{lemma}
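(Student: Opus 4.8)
The plan is to handle the two inequalities separately; throughout write $\alpha_\lambda := \|u_\lambda\|_{L^\infty(T)} = u_\lambda(\bar r_\lambda)$.

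\textbf{The upper bound.} This is immediate from evaluating the equation at the interior maximum. Since $\bar r_\lambda \in (1,2)$ is the unique maximum point (Lemma \ref{lem2.1}), we have $u_\lambda'(\bar r_\lambda)=0$ and $u_\lambda''(\bar r_\lambda)\le 0$, so evaluating
\begin{equation*}
-u_\lambda'' - \tfrac{N-1}{r}u_\lambda' + \lambda u_\lambda = u_\lambda^{p-1}
\end{equation*}
at $r=\bar r_\lambda$ yields $\lambda\alpha_\lambda \le \alpha_\lambda^{p-1}$, i.e. $\lambda/\alpha_\lambda^{p-2}\le 1$. In particular $\alpha_\lambda \ge \lambda^{1/(p-2)}\to+\infty$, a fact used below.

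\textbf{The lower bound, via a blow-up rescaling.} Suppose the claim fails. Then there is a sequence $\lambda_n\to+\infty$ with $\lambda_n/\alpha_n^{p-2}\to 0$, where $\alpha_n:=\alpha_{\lambda_n}\to+\infty$. Put $\varepsilon_n:=\alpha_n^{-(p-2)/2}\to 0$ and rescale around the maximum,
\begin{equation*}
v_n(y) := \alpha_n^{-1}\,u_{\lambda_n}\big(\bar r_{\lambda_n}+\varepsilon_n y\big), \qquad y\in I_n:=\Big(\tfrac{1-\bar r_{\lambda_n}}{\varepsilon_n},\ \tfrac{2-\bar r_{\lambda_n}}{\varepsilon_n}\Big).
\end{equation*}
A direct computation shows $v_n$ solves
\begin{equation*}
-v_n'' - \frac{(N-1)\varepsilon_n}{\bar r_{\lambda_n}+\varepsilon_n y}\,v_n' + \frac{\lambda_n}{\alpha_n^{p-2}}\,v_n = v_n^{p-1}\quad\text{in }I_n,
\end{equation*}
with $v_n(0)=1$, $v_n'(0)=0$ and $0\le v_n\le 1$. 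Since $\bar r_{\lambda_n}\in(1,2)$, at least one of $\bar r_{\lambda_n}-1$, $2-\bar r_{\lambda_n}$ is $\ge 1/2$; passing to a subsequence we may assume it is always the same side, and then the corresponding half of $I_n$ has length $\ge (2\varepsilon_n)^{-1}\to+\infty$, so $I_n$ exhausts a half-line. On that half-line the coefficients $\tfrac{(N-1)\varepsilon_n}{\bar r_{\lambda_n}+\varepsilon_n y}$ and $\tfrac{\lambda_n}{\alpha_n^{p-2}}$ tend to $0$ locally uniformly (here one uses that $\bar r_{\lambda_n}$ stays bounded away from $0$); a Gronwall estimate on compact subintervals bounds $v_n'$, hence $v_n''$, uniformly, and Arzel\`a--Ascoli gives $v_n\to v$ in $C^2_{loc}$, where $v$ solves $-v''=v^{p-1}$ on a half-line with $v(0)=1$, $v'(0)=0$ and $0\le v\le 1$. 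By a reflection we may assume the half-line is $[0,+\infty)$.

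\textbf{Ruling out the limit profile.} Since $v''=-v^{p-1}\le 0$ and $v'(0)=0$, $v$ is non-increasing; being bounded below it converges to some $\ell\in[0,1)$ (it cannot be identically $1$, by the equation), so $v''\to-\ell^{p-1}$. If $\ell>0$ this forces $v'\to-\infty$, contradicting $0\le v\le 1$; hence $\ell=0$ and $v'\to 0$. On the other hand, multiplying $-v''=v^{p-1}$ by $v'$ and integrating from $0$ gives the first integral $(v')^2=\tfrac{2}{p}\big(1-v^p\big)$, whence $(v')^2\to\tfrac{2}{p}\ne 0$ as $y\to+\infty$ — a contradiction. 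Therefore $\liminf_{\lambda\to+\infty}\lambda/\alpha_\lambda^{p-2}>0$.

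The main obstacle is the compactness step: extracting a nontrivial limit profile requires uniform $C^2_{loc}$ control of the rescaled ODE solutions $v_n$ together with careful bookkeeping of which boundary side the rescaled domain expands into; once this is in place, the monotonicity together with the first-integral identity closes the contradiction.
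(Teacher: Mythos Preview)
Your proof is correct and follows the same overall strategy as the paper: the upper bound comes from evaluating the equation at the interior maximum, and the lower bound from a blow-up rescaling at the maximum that produces a limit profile of $-v''=v^{p-1}$ which cannot exist. There are two differences worth recording. First, the paper invokes Lemma~\ref{lem2.2} (that $\bar r_{\lambda_n}\to 1$) to guarantee that the right endpoint of the rescaled interval tends to $+\infty$, and then splits into the cases where the left endpoint tends to $-\infty$ or stays bounded; you bypass this entirely by observing that, since $\bar r_{\lambda_n}\in(1,2)$, at least one half of the rescaled interval must expand, and a half-line is all you need. Second, the paper delegates the non-existence of the limit profile to an external reference (Lemma~\ref{lem2.4}), whereas you supply a direct elementary argument using concavity and the first integral $(v')^2=\tfrac{2}{p}(1-v^p)$. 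Your version is thus more self-contained; the paper's version, in exchange, records the location of $\bar r_{\lambda_n}$ and the two possible limiting domains, information that is reused in Theorem~\ref{thm2.5}. One minor remark on the write-up: once you have that $v'$ is non-increasing with $v'(0)=0$ and $v\ge 0$, the conclusion $v'\to 0$ already forces $v'\equiv 0$ and hence $v\equiv 1$, contradicting the equation directly --- so the first-integral step, while valid, is not strictly necessary.
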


    \begin{proof}
        Let $\bar{r}_{\lambda}$ be the unique maximum point of $u_{\lambda}$. Then
        \begin{equation}
            \lambda u_{\lambda}(\bar{r}_{\lambda}) < -u_{\lambda}''(\bar{r}_{\lambda}) + \lambda u_{\lambda}(\bar{r}_{\lambda}) = u_{\lambda}(\bar{r}_{\lambda})^{p-1},
        \end{equation}
        i.e. $\lambda < u_{\lambda}(\bar{r}_{\lambda})^{p-2}$.

        Next we will show that $\liminf_{\lambda \rightarrow +\infty}\lambda/\|u_{\lambda}\|_{L^{\infty}}^{p-2} > 0$. We will argue by contradiction: 
        Suppose on the contrary that there exists $\{\lambda_{n}\}$ with $\lambda_{n} \rightarrow +\infty$ such that 
        $\lambda_{n}/\|u_{\lambda_{n}}\|_{L^{\infty}}^{p-2} \rightarrow 0$. To simplify the notation, we shall denote $u_{n} = u_{\lambda_{n}}$ and 
        $\bar{r}_{n} = \bar{r}_{\lambda_{n}}$. Notice that $u_{n}$ satisfies
        \begin{equation}
            \begin{gathered}
                -(u_{n}'' + \frac{N-1}{r}u_{n}') + \lambda_{n} u_{n} = u_{n}^{p-1} \ \text{in} \ I, \\
                u_{n}(1) = u_{n}(2) = 0, 
            \end{gathered}
        \end{equation}
        where $I = (1,2)$. We consider
        \begin{equation} \label{eqa2.5}
            v_{n}(r) = \frac{1}{\|u_{n}\|_{L^{\infty}(I)}} u_{n} (\frac{r}{\|u_{n}\|_{L^{\infty}(I)}^{\frac{p-2}{2}}} + \bar{r}_{n})  
        \end{equation}
        where $r \in I_{n} = (\|u_{n}\|_{L^{\infty}(I)}^{\frac{p-2}{2}}(1 - \bar{r}_{n}),\|u_{n}\|_{L^{\infty}(I)}^{\frac{p-2}{2}}(2 - \bar{r}_{n}))$. Then $\|v_{n}\|_{L^{\infty}(I_{n})} = v_{n}(0) = 1$ and
        \begin{equation} \label{eqa2.6}
            -(v_{n}'' + \frac{N-1}{r + \|u_{n}\|_{L^{\infty}(I)}^{\frac{p-2}{2}}\bar{r}_{n}}v_{n}') + \frac{\lambda_{n}}{\|u_{n}\|_{L^{\infty}}^{p-2}}v_{n} = v_{n}^{p-1} \text{ in } I_{n}, \quad
            v_{n} = 0 \text{ on } \partial I_{n}.
        \end{equation}
        By Lemma 2.2, we have $\bar{r}_{n} \rightarrow 1$ up to a subsequence. So we have
        \begin{equation*}
            \lim_{n \rightarrow +\infty}\|u_{n}\|_{L^{\infty}(I)}^{\frac{p-2}{2}}(2 - \bar{r}_{n}) = +\infty.
        \end{equation*}
        If
        \begin{equation*}
            \lim_{n \rightarrow +\infty}\|u_{n}\|_{L^{\infty}(I)}^{\frac{p-2}{2}}(1 - \bar{r}_{n}) = -\infty,
        \end{equation*}
        $I_{n} \rightarrow \mathbb{R}$. For any $R > 0$, $(-R,R)$ is contained in $I_{n}$ for $n$ large enough. Applying standard elliptic estimates and passing to a subsequence if necessary, 
        we may assume that $v_{n} \rightarrow v$ in $C_{loc}(\mathbb{R})$ where $v(0) = 1$ and $v$ satisfies
        \begin{equation} \label{eqa2.7}
            -v'' = v^{p-1} \ in \ \mathbb{R}, 0 \leq v \leq 1 \ in \ \mathbb{R}, v \in C^{2}(\mathbb{R}).
        \end{equation}
        If
        \begin{equation*}
            \lim_{n \rightarrow +\infty}\|u_{n}\|_{L^{\infty}(I)}^{\frac{p-2}{2}}(1 - \bar{r}_{n}) = -k \leq 0,
        \end{equation*}
        $I_{n} \rightarrow (-k,+\infty)$. Similarly, we obtain a solution $v$ with $v(0) = 1$ of
        \begin{equation} \label{eqa2.8}
        -v'' = v^{p-1} \ in \ (-k,+\infty), v(-k) = 0, 0 \leq v \leq 1, v \in C^{2}([-k,+\infty)).
        \end{equation}
        Noticing that $v(0) = 1$ and $v(-k) = 0$, we know that $k > 0$. Then the following Lemma \ref{lem2.4} provides a contradiction.
    \end{proof}

    \begin{lemma}\label{lem2.4} 
        \eqref{eqa2.7} (resp. \eqref{eqa2.8}) admits no solution $v$ that satisfies $v(0) = 1$.
    \end{lemma}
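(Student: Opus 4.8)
The plan is to rule out both \eqref{eqa2.7} and \eqref{eqa2.8} simultaneously by a short ODE rigidity argument, exploiting concavity together with the two-sided bound $0\le v\le 1$. The key preliminary observation is that in either case the half-line $[0,+\infty)$ is contained in the domain and $0$ is an \emph{interior} point of it; for this reason the two cases can be handled at once, and the boundary condition $v(-k)=0$ in \eqref{eqa2.8} will not even be needed.

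First I would record that, since $v\ge 0$, the right-hand side $v^{p-1}$ is nonnegative, so $v''=-v^{p-1}\le 0$ on the domain; that is, $v'$ is non-increasing. Next I would show $v'\ge 0$ everywhere: if $v'(x_0)<0$ for some $x_0$ in the domain, then monotonicity of $v'$ gives $v'(x)\le v'(x_0)<0$ for all $x\ge x_0$, and integrating yields $v(x)\le v(x_0)+v'(x_0)(x-x_0)\to-\infty$ as $x\to+\infty$, contradicting $v\ge 0$ (here one uses that $[x_0,+\infty)$ stays inside the domain, which holds in both cases). Hence $v$ is non-decreasing on the whole domain. Finally, combining this with $v(0)=1$ and $v\le 1$ forces $v\equiv 1$ on $[0,+\infty)$, so $v''\equiv 0$ on $(0,+\infty)$; but then the equation gives $0=-v''=v^{p-1}=1$ on $(0,+\infty)$, a contradiction. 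Therefore no solution with $v(0)=1$ exists, proving Lemma \ref{lem2.4}.

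There is essentially no hard step here: the statement is an elementary fact about bounded concave solutions of $-v''=v^{p-1}$ on a half-line. The only point deserving a word of care — and the reason the $\mathbb{R}$ case and the half-line case collapse into one — is the interior-point remark above: it is what makes the normalization $v(0)=1$ genuinely incompatible with a globally non-decreasing $v\le 1$ satisfying the equation. Accordingly, the write-up will consist only of making the three steps precise.
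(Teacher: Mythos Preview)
Your argument is correct and complete: concavity forces $v'\ge 0$ on the whole half-line, the normalization $v(0)=1$ together with $v\le 1$ then pins $v\equiv 1$ on $[0,+\infty)$, and this is incompatible with $-v''=v^{p-1}$. The paper itself does not prove this lemma but simply defers to \cite[Lemma~2.4]{song2022threshold}, so there is no in-paper argument to compare against; your short concavity-plus-monotonicity proof is a clean, self-contained replacement for that citation.
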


    \begin{proof}
        See \cite[Lemma2.4]{song2022threshold}.
    \end{proof}

    \begin{theorem}\label{thm2.5} 
        Let $u_{n}(r) = u_{\lambda_{n}}$ be the unique solution of \eqref{NLS} in $H_{0,rad}^{1}(T)$ with $\lambda = \lambda_{n} \rightarrow +\infty$ and
        \begin{equation}
            \begin{gathered}
                \omega_{n}(r) = \lambda_{n}^{\frac{1}{2-p}} u_{n}(\frac{r}{\sqrt{\lambda_{n}}} + \bar{r}_{n}), \\
                r \in \widetilde{I}_{n} = (\sqrt{\lambda_{n}}(1 - \bar{r}_{n}),\sqrt{\lambda_{n}}(2 - \bar{r}_{n})),
            \end{gathered}
        \end{equation}
        where $\bar{r}_{n} = \bar{r}_{\lambda_{n}}$ is the unique maximum point of $u_{n}$. Passing to a subsequence if necessary, we may assume that 
        $\omega_{n}(r) \rightarrow W(r)$ in $H_{0}^{1}(\mathbb{R})$ where $W \in H^{1}(\mathbb{R})$ is the unique solution of
        \begin{equation} \label{eqa2.10}
        -W'' + W = W^{p-1} \ in \ \mathbb{R}, W(r) \rightarrow 0 \ as \ |r| \rightarrow +\infty.
        \end{equation}
    \end{theorem}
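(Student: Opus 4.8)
The plan is to carry out a blow-up/rescaling analysis around the maximum point $\bar r_n$ at the natural scale $\lambda_n^{-1/2}$, following the strategy of \cite{song2022threshold}. First I would record the ODE satisfied by $\omega_n$: substituting the definition \eqref{eqa2.9} into \eqref{NLS} shows that on $\widetilde I_n$,
\begin{equation*}
-\Bigl(\omega_n'' + \frac{N-1}{r + \sqrt{\lambda_n}\,\bar r_n}\,\omega_n'\Bigr) + \omega_n = \omega_n^{p-1}, \qquad \omega_n = 0 \text{ on } \partial\widetilde I_n,
\end{equation*}
with $\|\omega_n\|_{L^\infty(\widetilde I_n)} = \omega_n(0) = 1$ by Lemma \ref{lem2.1}. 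The coefficient $\frac{N-1}{r+\sqrt{\lambda_n}\bar r_n}$ is the key simplification: since $\bar r_n \in (1,2)$ and $\lambda_n \to +\infty$, on any fixed interval $(-R,R)$ this coefficient tends uniformly to $0$, so the limiting equation loses its first-order term and becomes exactly \eqref{eqa2.10}.

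Next I would establish that $\widetilde I_n$ exhausts $\mathbb{R}$. By Lemma \ref{lem2.2} we have $\bar r_n \to 1$ along a subsequence, hence $\sqrt{\lambda_n}(2-\bar r_n) \to +\infty$. For the left endpoint I would argue by contradiction exactly as in the proof of Lemma \ref{lem2.3}: if $\sqrt{\lambda_n}(1-\bar r_n)$ stayed bounded, then $\omega_n$ would converge (by the elliptic/ODE estimates below) to a solution of $-W'' + W = W^{p-1}$ on a half-line $(-k,+\infty)$ with $W(-k)=0$, $W(0)=1$, $0 \le W \le 1$. One then needs the analogue of Lemma \ref{lem2.4} for this massive equation — a solution of $-W''+W=W^{p-1}$ on $(-k,+\infty)$ vanishing at $-k$, bounded between $0$ and $1$, with interior maximum value $1$, cannot exist (a phase-plane/Pohozaev argument: the conserved energy $\tfrac12(W')^2 - \tfrac12 W^2 + \tfrac1p W^p$ forces $W$ to overshoot past $1$ or to be non-monotone in a way incompatible with the boundary data). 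This rules out the bounded case and forces $\sqrt{\lambda_n}(1-\bar r_n) \to -\infty$, so $\widetilde I_n \to \mathbb{R}$.

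With $\widetilde I_n \to \mathbb{R}$ in hand, I would fix $R>0$, note $(-R,R)\subset \widetilde I_n$ for large $n$, and use the uniform bound $0 \le \omega_n \le 1$ together with standard elliptic (here ODE) estimates to get $\omega_n$ bounded in $C^{2}_{loc}$; by Arzelà–Ascoli and a diagonal argument, up to a subsequence $\omega_n \to W$ in $C^{1}_{loc}(\mathbb{R})$ with $W$ solving \eqref{eqa2.10} in the classical sense, $W(0)=1$, $0\le W \le 1$. The nondegeneracy — that $W$ is the (unique up to translation) ground state and that the convergence upgrades to $H^1(\mathbb{R})$ — requires a decay/tightness estimate: I would show $\omega_n$ decays exponentially uniformly in $n$ away from $0$, e.g. by a comparison-function argument using that $\omega_n^{p-1} \le \tfrac12 \omega_n$ outside a fixed neighborhood of $0$ (valid since $\omega_n \le 1$ and $\omega_n \to 0$ locally uniformly away from the max, which follows once one knows $W$ decays), giving $\omega_n(r) \le Ce^{-|r|/2}$; this kills the tails, yields $\|\omega_n\|_{L^2}, \|\omega_n'\|_{L^2}$ uniformly integrable, and promotes $C^1_{loc}$ convergence to $H^1(\mathbb{R})$ convergence. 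Finally, since $W$ is a positive (by $W(0)=1>0$ and the strong maximum principle, $W>0$) $H^1$ solution of $-W''+W=W^{p-1}$ on $\mathbb{R}$, it is the unique even solution translated so its maximum is at $0$; uniqueness of such a solution is classical (the ODE $-W''+W=W^{p-1}$, $W(\pm\infty)=0$, has a unique positive solution up to translation, obtainable explicitly via quadrature).

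The main obstacle I expect is the uniform-in-$n$ exponential decay estimate needed to obtain strong $H^1(\mathbb{R})$ convergence rather than mere local convergence — one must rule out the possibility that a positive fraction of the $H^1$-norm of $\omega_n$ escapes to $|r|$ large inside $\widetilde I_n$. This is where the specific structure (the sign of the zeroth-order term after rescaling, monotonicity of $u_\lambda$ from Lemma \ref{lem2.1}, and the a priori bound $\lambda_n \le \|u_n\|_{L^\infty}^{p-2}$ plus the lower bound from Lemma \ref{lem2.3} guaranteeing the scale $\lambda_n^{-1/2}$ is comparable to $\|u_n\|_{L^\infty}^{-(p-2)/2}$) must be exploited carefully; a comparison argument with the solution of the linear equation $-\phi'' + \tfrac12\phi = 0$ on the region where $\omega_n$ is small does the job, but getting the constants uniform in $n$ and handling the curvature term $\frac{N-1}{r+\sqrt{\lambda_n}\bar r_n}\omega_n'$ (which is $O(\lambda_n^{-1/2})$ but has a sign that helps, since $\bar r_n \to 1$ keeps the denominator positive and large) requires care.
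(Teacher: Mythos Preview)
Your approach is essentially the same as the paper's: derive the rescaled ODE for $\omega_n$, use Lemmas \ref{lem2.2}--\ref{lem2.4} (and their analogues for the equation $-W''+W=W^{p-1}$) to show $\widetilde I_n \to \mathbb{R}$, pass to the limit in $C^2_{loc}$, identify $W$ as the unique ground state via its monotonicity on either side of $0$ (the paper invokes Lemma \ref{lem2.1} for this rather than the maximum principle), and then upgrade to $H^1(\mathbb{R})$ convergence by a uniform exponential decay estimate, for which both you and the paper defer to \cite[Theorem 2.5]{song2022threshold}.

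One slip to correct: you write $\|\omega_n\|_{L^\infty(\widetilde I_n)} = \omega_n(0) = 1$ by Lemma \ref{lem2.1}. The first equality is right, but the second is not: the rescaling here is by $\lambda_n^{1/(2-p)}$, not by $\|u_n\|_{L^\infty}^{-1}$ as in \eqref{eqa2.5}, so $\omega_n(0) = \bigl(\|u_n\|_{L^\infty}^{p-2}/\lambda_n\bigr)^{1/(p-2)}$, which by Lemma \ref{lem2.3} is only known to lie in a bounded interval $[1,C]$. This does not damage your argument --- boundedness of $\omega_n(0)$ is all that is needed for the elliptic estimates and the compactness step, and the half-line nonexistence and positivity arguments go through verbatim with $W(0)>0$ in place of $W(0)=1$ --- but the justification should cite Lemma \ref{lem2.3}, not Lemma \ref{lem2.1}.
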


    \begin{proof}
        Note that $\omega_{n}$ satisfies
        \begin{equation} \label{eqa2.11}
            -(\omega_{n}'' + \frac{N-1}{r + \sqrt{\lambda_{n}}\bar{r}_{n}}\omega_{n}') + \omega_{n} = \omega_{n}^{p-1} \ in \ \widetilde{I}_{n}, \omega_{n} = 0 \ on \ \partial \widetilde{I}_{n}.
        \end{equation}
        By Lemma \ref{lem2.3}, $\omega_{n}(0) = \|\omega_{n}\|_{L^{\infty}}$ is bounded. Similar to Lemma \ref{lem2.3} and \ref{lem2.4}, we can deduce that
        \begin{equation*}
            \lim_{n \rightarrow +\infty}\sqrt{\lambda_{n}}(1 - \bar{r}_{n}) = -\infty, \quad \lim_{n \rightarrow +\infty}\sqrt{\lambda_{n}}(2 - \bar{r}_{n}) = +\infty.
        \end{equation*}
        So we know that $\omega_{n} \rightarrow W$ in $C_{loc}^{2}(\mathbb{R})$ where $W(0) = \max W(r)$ and $W$ satisfies
        \begin{equation}
            -W'' + W = W^{p-1} \ in \ \mathbb{R}, W \geq 0, W \in C^{2}(\mathbb{R}).
        \end{equation}
        According to Lemma \ref{lem2.1}, $W$ is increasing in $(-\infty,0)$ and decreasing in $(0,+\infty)$. Thus $W(r) \rightarrow 0$ as $r \rightarrow +\infty$, i.e. $W$ is the unique solution of \eqref{eqa2.10}.
        
        The convergence in $H_{0}^{1}(\mathbb{R})$ and the exponential decay argument can be shown as \cite[Theorem2.5]{song2022threshold}.
    \end{proof}

    \begin{corollary}
        $\omega_{n}$ and $W$ are given by Theorem \ref{thm2.5}. Passing to a subsequence if necessary, we have
        \begin{equation} \label{eqa2.12}
        \int_{\sqrt{\lambda_{n}}(1 - \bar{r}_{n})}^{\sqrt{\lambda_{n}}(2 - \bar{r}_{n})}\omega_{n}^{2}r^{k} \rightarrow \int_{-\infty}^{+\infty}W^{2}r^{k}, k = 0, 1, \cdots, N-1.
        \end{equation}
    \end{corollary}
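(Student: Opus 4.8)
The plan is to promote the local convergence $\omega_n \to W$ from Theorem~\ref{thm2.5} to convergence of the weighted integrals, by showing that the polynomial weights $r^k$ are controlled, uniformly in $n$, by one fixed exponentially decaying envelope for $\omega_n$.

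\emph{Step 1: uniform exponential decay.} First I would establish that there exist $C,\delta>0$, independent of $n$, with
\begin{equation*}
  0 \le \omega_n(r) \le C e^{-\delta |r|} \qquad \text{for all } r \in \widetilde I_n \text{ and all large } n .
\end{equation*}
This is the barrier argument of \cite[Theorem 2.5]{song2022threshold}. By Lemma~\ref{lem2.3}, $\|\omega_n\|_{L^\infty}$ is bounded; by Lemma~\ref{lem2.1}, $\omega_n$ is increasing on the negative part of $\widetilde I_n$ and decreasing on the positive part; and $\omega_n \to W$ in $C^2_{loc}(\mathbb R)$ with $W(r)\to 0$ as $|r|\to\infty$. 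Hence, fixing $R_0$ with $W(\pm R_0)$ small, one gets $\omega_n(r)^{p-2}\le \tfrac12$ for $|r|\ge R_0$ and $n$ large, so that $\omega_n$ is a subsolution of $-\big(\varphi''+\tfrac{N-1}{r+\sqrt{\lambda_n}\bar r_n}\varphi'\big)+\tfrac12\varphi=0$ on $\widetilde I_n\cap\{|r|\ge R_0\}$. On $r\ge R_0$ the first-order term has a favourable sign for the comparison function $e^{-\delta r}$, and on $r\le -R_0$ it is $O(\lambda_n^{-1/2})$ because $r+\sqrt{\lambda_n}\bar r_n\ge \sqrt{\lambda_n}$ there; choosing $\delta<1/\sqrt2$ and the constant in front large enough, $Ce^{-\delta|r|}$ is a supersolution, and the maximum principle on $\widetilde I_n\cap\{|r|\ge R_0\}$ gives the bound. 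The same reasoning (or the decay already noted in Theorem~\ref{thm2.5}) gives $0\le W(r)\le Ce^{-\delta|r|}$.

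\emph{Step 2: passing to the limit.} Fix $k\in\{0,\dots,N-1\}$ and $\epsilon>0$, and choose $R>0$ with $\int_{|r|\ge R}C^2 e^{-2\delta|r|}|r|^k\,dr<\epsilon$. For $n$ large enough that $[-R,R]\subset\widetilde I_n$,
\begin{equation*}
  \Big|\int_{\widetilde I_n}\omega_n^2 r^k-\int_{-\infty}^{+\infty}W^2 r^k\Big|
  \le \int_{-R}^{R}|\omega_n^2-W^2|\,|r|^k\,dr+\int_{\widetilde I_n\setminus[-R,R]}\omega_n^2|r|^k\,dr+\int_{|r|\ge R}W^2|r|^k\,dr .
\end{equation*}
The last two terms are each $<\epsilon$ by the choice of $R$ and Step~1 (note $|r^k|=|r|^k$, so the sign of $r^k$ for odd $k$ is irrelevant), while the first term tends to $0$ since $\omega_n\to W$ uniformly on $[-R,R]$ and $|r|^k$ is bounded there. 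Letting $n\to\infty$ and then $\epsilon\to0$ gives \eqref{eqa2.12}. No further subsequence is extracted: everything runs along the subsequence already fixed in Theorem~\ref{thm2.5}.

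\emph{Main obstacle.} The only substantial point is Step~1, the $n$-uniform exponential decay; once it is in place the corollary reduces to the standard ``convergence on compacts plus uniform tails'' estimate. Within Step~1 the delicate bookkeeping is to keep the first-order coefficient $\frac{N-1}{r+\sqrt{\lambda_n}\bar r_n}$ from spoiling the comparison, which is handled by using its sign for $r>0$ and the largeness of $r+\sqrt{\lambda_n}\bar r_n$ for $r<0$, exactly as in \cite[Theorem 2.5]{song2022threshold}.
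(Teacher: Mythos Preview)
Your proposal is correct and follows exactly the paper's approach: the paper's proof is the single sentence ``\eqref{eqa2.12} directly follows by the convergence of $\omega_n$ in $C^2_{loc}(\mathbb R)$ and the uniform exponential decay,'' and your Steps~1--2 simply unpack that sentence. The uniform exponential decay you re-derive in Step~1 is already part of the proof of Theorem~\ref{thm2.5} (which defers to \cite[Theorem~2.5]{song2022threshold}), so in the paper the corollary takes it as given rather than re-establishing it.
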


    \begin{proof}
        \eqref{eqa2.12} directly follows by the convergence of $\omega_{n}$ in $C_{loc}^{2}(\mathbb{R})$ and the uniform exponential decay.
    \end{proof}

    The main theorem of this section is as follows:

    \begin{theorem}\label{thm2.7} 
        Let $u_{\lambda}$ be the unique solution of \eqref{NLS} in $H_{0,rad}^{1}(T)$ with $\lambda \rightarrow +\infty$.
        \begin{enumerate}
            \item When $N = 2$ and $p < 6$ or $N \geq 3$, $\int_{T}u_{\lambda}^{2} \rightarrow +\infty$.
            \item When $N = 2$ and $p = 6$, $\liminf_{\lambda \rightarrow +\infty}\int_{T}u_{\lambda}^{2} \in (0,+\infty)$.
            \item When $N = 2$ and $p > 6$,  $\int_{T}u_{\lambda}^{2} \rightarrow 0$.
        \end{enumerate}
    \end{theorem}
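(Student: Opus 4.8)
The plan is to substitute the blow-up rescaling of Theorem~\ref{thm2.5} into the mass $\int_T u_\lambda^2\,dx$ and simply keep track of the powers of $\lambda$; all three regimes then follow from the sign of a single exponent. First I would write the mass in polar coordinates, $\int_T u_\lambda^2\,dx=\sigma_{N-1}\int_1^2 u_\lambda(r)^2 r^{N-1}\,dr$, where $\sigma_{N-1}$ is the area of the unit sphere in $\mathbb{R}^N$. Fixing an arbitrary sequence $\lambda_n\to+\infty$ and writing $u_n=u_{\lambda_n}$, $\bar r_n=\bar r_{\lambda_n}$, with $\omega_n,\widetilde I_n,W$ as in Theorem~\ref{thm2.5}, the identity $u_n\!\big(\tfrac{s}{\sqrt{\lambda_n}}+\bar r_n\big)=\lambda_n^{\frac{1}{p-2}}\omega_n(s)$ together with the change of variables $r=\tfrac{s}{\sqrt{\lambda_n}}+\bar r_n$ gives
\[
\int_T u_n^2\,dx=\sigma_{N-1}\,\lambda_n^{\frac{2}{p-2}-\frac12}\int_{\widetilde I_n}\omega_n(s)^2\Big(\tfrac{s}{\sqrt{\lambda_n}}+\bar r_n\Big)^{N-1}ds .
\]

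Next, since for $s\in\widetilde I_n$ the point $\tfrac{s}{\sqrt{\lambda_n}}+\bar r_n$ is exactly the original radial variable $r\in(1,2)$, the weight $\big(\tfrac{s}{\sqrt{\lambda_n}}+\bar r_n\big)^{N-1}$ lies between $1$ and $2^{N-1}$, so I would sandwich
\[
\sigma_{N-1}\lambda_n^{\frac{2}{p-2}-\frac12}\!\int_{\widetilde I_n}\!\omega_n^2\,ds\;\le\;\int_T u_n^2\,dx\;\le\;2^{N-1}\sigma_{N-1}\lambda_n^{\frac{2}{p-2}-\frac12}\!\int_{\widetilde I_n}\!\omega_n^2\,ds .
\]
By \eqref{eqa2.12} with $k=0$ one has $\int_{\widetilde I_n}\omega_n^2\,ds\to m:=\int_{\mathbb R}W^2\,ds$, and $m\in(0,+\infty)$ since $W$ is a nontrivial $H^1(\mathbb R)$ solution of \eqref{eqa2.10}; because $W$ is the \emph{unique} such solution (normalised by $W(0)=\max W$), this limit is independent of the subsequence and hence holds along the full sequence $\lambda_n$. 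Consequently, for $n$ large, $\int_T u_n^2\,dx$ is comparable to $\lambda_n^{\beta}$ with $\beta:=\frac{2}{p-2}-\frac12$.

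Finally I would read off the cases from the sign of $\beta$. Since $p>2$, one checks $\beta>0\Leftrightarrow p<6$, $\beta=0\Leftrightarrow p=6$, $\beta<0\Leftrightarrow p>6$; and when $N\ge3$ Sobolev subcriticality forces $p<2^\ast\le6$, hence $\beta>0$ automatically. Thus in case (1) ($N\ge3$, or $N=2$ with $p<6$) the lower bound gives $\int_T u_n^2\,dx\to+\infty$, and as $\lambda_n\to+\infty$ was arbitrary, $\int_T u_\lambda^2\,dx\to+\infty$. In case (2) ($N=2$, $p=6$) one has $\beta=0$, so with $\sigma_1=2\pi$ the sandwich reads $2\pi\int_{\widetilde I_n}\omega_n^2\le\int_T u_n^2\le4\pi\int_{\widetilde I_n}\omega_n^2$, whence $\liminf_{\lambda\to+\infty}\int_T u_\lambda^2\ge2\pi m>0$ and $\limsup_{\lambda\to+\infty}\int_T u_\lambda^2\le4\pi m<+\infty$, i.e. $\liminf_{\lambda\to+\infty}\int_T u_\lambda^2\in(0,+\infty)$. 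In case (3) ($N=2$, $p>6$) one has $\beta<0$, so $\lambda_n^{\beta}\to0$ and the upper bound forces $\int_T u_\lambda^2\,dx\to0$.

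The computation is essentially bookkeeping once Theorem~\ref{thm2.5} and \eqref{eqa2.12} are available. The only points needing a little care are upgrading the subsequential convergence $\int_{\widetilde I_n}\omega_n^2\to m$ to the whole sequence — which uses the uniqueness of the soliton $W$ — and observing that $N\ge3$ already confines us to $p<6$; I do not expect a genuine obstacle, since the substantive analysis (the rescaling, the non-escape of mass guaranteed by the uniform exponential decay, and the $C^2_{loc}$ compactness of $\omega_n$) is already contained in the preceding results.
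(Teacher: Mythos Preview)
Your proposal is correct and follows the same underlying idea as the paper: substitute the rescaling of Theorem~\ref{thm2.5} into the mass integral, use \eqref{eqa2.12} to control the rescaled $L^2$ norm of $\omega_n$, and read off the three regimes from the sign of the exponent $\tfrac{2}{p-2}-\tfrac{1}{2}$.

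The execution differs slightly from the paper's. The paper expands $(r+\bar r_n)^{N-1}$ binomially and then argues each case by contradiction, picking out the leading $k=0$ term. You instead observe directly that $\tfrac{s}{\sqrt{\lambda_n}}+\bar r_n\in(1,2)$, so the weight is trapped between $1$ and $2^{N-1}$, which yields the two-sided bound in one line and removes the need for contradiction. This is a genuine simplification; it loses nothing, since the statement only asks for the order of magnitude (and in case~(2) only for $\liminf\in(0,+\infty)$), not for the precise constant. Your remark that the uniqueness of $W$ upgrades the subsequential convergence in \eqref{eqa2.12} to full-sequence convergence is also a clean way to handle what the paper does implicitly via contradiction arguments.
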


    \begin{proof}
        Let $u_{n} = u_{\lambda_{n}}$ be the unique solution of \eqref{NLS} in $H_{0,rad}^{1}(T)$ with $\lambda = \lambda_{n} \rightarrow +\infty$ and $\bar{r}_{n} = \bar{r}_{\lambda_{n}}$ be the unique maximum point of $u_{n}$. Direct computations imply 
        \begin{equation}\label{eqa2.13}
            \begin{aligned} 
                \int_{T}u_{n}^{2} &= C\int_{1}^{2}u_{n}^{2}r^{N-1}dr \\
                &= C\int_{1 - \bar{r}_{n}}^{2 - \bar{r}_{n}}(u_{n}(r+\bar{r}_{n}))^{2}(r+\bar{r}_{n})^{N-1}dr \\
                &= C\int_{1 - \bar{r}_{n}}^{2 - \bar{r}_{n}}(u_{n}(r+\bar{r}_{n}))^{2}(\Sigma_{k = 0}^{N-1}C_{N-1}^kr^k\bar{r}_{n}^{N-1-k})dr \\
                &= C\Sigma_{k = 1}^{N-1}C_{N-1}^k\lambda_{n}^{\frac{2}{p-2} - \frac{k+1}{2}}\bar{r}_{n}^{N-1-k}\int_{\sqrt{\lambda_{n}}(1 - \bar{r}_{n})}^{\sqrt{\lambda_{n}}(2 - \bar{r}_{n})}\omega_{n}^{2}r^{k}dr,
            \end{aligned}
        \end{equation}
        where $C = \int_{S^{N-1}}d\sigma> 0$, $C_{N-1}^k = \frac{(N-1)(N-2)\cdots(N-k)}{k(k-1)\cdots 1}$ if $k \geq 1$ and $C_{N-1}^0 = 1$.

        Proof of (1): We will argue by contradiction, suppose that there exists $\{u_{n}\}$ such that $u_{n} = u_{\lambda_{n}}$ is the unique solution of \eqref{NLS} in $H_{0,rad}^{1}(T)$ with $\lambda = \lambda_{n} \rightarrow +\infty$ and
        \begin{equation} \label{eqa.13}
            \lim_{n \rightarrow +\infty}\int_{T}u_{n}^{2} < +\infty.
        \end{equation}
        When $N = 2$, we assume that $p < 6$. When $N \geq 3$, $p < 2N/(N-2) \leq 6$. Up to a subsequence, \eqref{eqa2.13} shows that for large $n$,
        \begin{equation}
            \begin{aligned}
                \int_{T}u_{n}^{2} &\geq \frac{C}{2}\lambda_{n}^{\frac{2}{p-2} - \frac{1}{2}}\bar{r}_{n}^{N-1}\int_{\sqrt{\lambda_{n}}(1 - \bar{r}_{n})}^{\sqrt{\lambda_{n}}(2 - \bar{r}_{n})}\omega_{n}^{2}dr \\ 
                                  &\geq \frac{C}{2}\lambda_{n}^{\frac{2}{p-2} - \frac{1}{2}}\int_{\sqrt{\lambda_{n}}(1 - \bar{r}_{n})}^{\sqrt{\lambda_{n}}(2 - \bar{r}_{n})}\omega_{n}^{2}dr \rightarrow +\infty,
            \end{aligned}
        \end{equation}
        which is a contradiction with \eqref{eqa.13}.

        \medskip

        Proof of (2): First, we prove that $\liminf_{\lambda \rightarrow +\infty}\int_{T}u_{\lambda}^{2} > 0$. We will argue by contradiction. Assume that there exists $\{u_{n}\}$ such that $u_{n} = u_{\lambda_{n}}$ is the unique solution of \eqref{NLS} in $H_{0,rad}^{1}(T)$ with $\lambda = \lambda_{n} \rightarrow +\infty$ and
        \begin{equation} \label{eqa.14}
            \int_{T}u_{n}^{2} \rightarrow 0.
        \end{equation}
        Up to a subsequence, \eqref{eqa2.13} shows that for large $n$,
        \begin{equation}
            \begin{aligned}
                \int_{T}u_{n}^{2} &\geq \frac{C}{2}\bar{r}_{n}^{N-1}\int_{\sqrt{\lambda_{n}}(1 - \bar{r}_{n})}^{\sqrt{\lambda_{n}}(2 - \bar{r}_{n})}\omega_{n}^{2}dr \\
                                  &\geq \frac{C}{2}\int_{\sqrt{\lambda_{n}}(1 - \bar{r}_{n})}^{\sqrt{\lambda_{n}}(2 - \bar{r}_{n})}\omega_{n}^{2}dr \rightarrow \frac{C}{2}\int_{-\infty}^{+\infty}W^{2}dr > 0,
            \end{aligned}
        \end{equation}
        where $W$ is given by Theorem \ref{thm2.5}. Thus we find a contradiction with \eqref{eqa.14}.

        Next, we prove that $\liminf_{\lambda \rightarrow +\infty}\int_{T}u_{\lambda}^{2} < +\infty$. According to Lemma \ref{lem2.2}, up to a subsequence, \eqref{eqa2.13} shows that
        \begin{equation}
            \begin{aligned}              
                \int_{T}u_{n}^{2} &= C\Sigma_{k = 0}^{N-1}C_{N-1}^k\lambda_{n}^{\frac{1}{2} - \frac{k+1}{2}}\bar{r}_{n}^{N-1-k}\int_{\sqrt{\lambda_{n}}(1 - \bar{r}_{n})}^{\sqrt{\lambda_{n}}(2 - \bar{r}_{n})}\omega_{n}^{2}r^{k}dr \\
                &\rightarrow C\int_{-\infty}^{+\infty}W^{2}dr < +\infty,
            \end{aligned}
        \end{equation}
        where $W$ is given by Theorem \ref{thm2.5}. Consequently we arrive at the conclusion as desired.

        \medskip

        Proof of (3): We will argue by contradiction, let us assume that there exists $\{u_{n}\}$ such that $u_{n} = u_{\lambda_{n}}$ is the unique solution of \eqref{NLS} in $H_{0,rad}^{1}(T)$ with $\lambda = \lambda_{n} \rightarrow +\infty$ and
        \begin{equation} \label{eqa.16}
            \lim_{n \rightarrow +\infty}\int_{T}u_{n}^{2} > 0.
        \end{equation}
            When $N = 2$ and $p > 6$, up to a subsequence, \eqref{eqa2.13} shows that for large $n$,
        \begin{equation}
        \int_{T}u_{n}^{2} \leq 2C\lambda_{n}^{\frac{2}{p-2} - \frac{1}{2}}\bar{r}_{n}^{N-1}\int_{\sqrt{\lambda_{n}}(1 - \bar{r}_{n})}^{\sqrt{\lambda_{n}}(2 - \bar{r}_{n})}\omega_{n}^{2}dr \rightarrow 0,
        \end{equation}
        which is a contradiction with (\ref{eqa.16}).
    \end{proof}

\section{Asymptotic behaviors when $\lambda \rightarrow -\lambda_1$}

    As stated in the introduction, \eqref{NLS} possesses a unique and non-degenerate solution $u_{\lambda}$ in $H_{0,rad}^{1}(T)$ for any fixed $\lambda > -\lambda_1$. Furthermore, $u_{\lambda}$ 
    is the ground state of \eqref{NLS}. Then similar to the proof of \cite[Corollary 2.5]{song2022properties}, we know that $\{(\lambda,u_{\lambda}): \lambda > -\lambda_1\}$ is a 
    continuous curve in $\mathbb{R} \times H_{0,rad}^{1}(T)$. 
    
    Set $d(\lambda) = \int_{T}u_{\lambda}^{2}$, which is a continuous function of $\lambda > -\lambda_1$. Note that \eqref{NLS} 
    has a solution with $L^2$ norm $c$ is equivalent to that $d(\lambda) = c$ for some $\lambda > -\lambda_1$. Also, the number of normalized solutions with mass $c$ is the number of $\lambda$ in $\{\lambda > -\lambda_1: d(\lambda) = c\}$. By a standard bifurcation argument, 
    it is easy to show that $-\lambda_1$ is a bifurcation point and solution that bifurcates from $-\lambda_1$ is radial. Use the uniqueness results, we deduce that $\lim_{\lambda \rightarrow -\lambda_1} d(\lambda) = 0$.

    \medskip

    \noindent \textbf{Proof of Theorem \ref{thm1.1}:  }  

    \medskip

    Proof of (1): If $N \geq 3$ or if $N = 2 , p < 6$, we have $\lim_{\lambda \to -\lambda_1} d(\lambda) = 0$, $\lim_{\lambda \to +\infty} d(\lambda) = +\infty$.
    Hence, for any $c > 0$, $d^{-1}(c)$ is not empty. Then the conclusion of Theorem \ref{thm1.1} (1) holds true.

    \medskip

    Proof of (2): If $N = 2 , p = 6$, we have $\lim_{\lambda \to -\lambda_1} d(\lambda) = 0$, $\liminf_{\lambda \to +\infty} d(\lambda) \in (0,+\infty)$. If we set
    $\eta_1 = \sup_{\lambda > -\lambda_1} d(\lambda)$. If $\sup_{\lambda > 0}d(\lambda) = \limsup_{\lambda \rightarrow +\infty}d(\lambda)$, then $\eta_1 = \limsup_{\lambda \rightarrow +\infty}d(\lambda)$. Otherwise,
    $\eta_1 = \max_{\lambda > 0}d(\lambda) > \limsup_{\lambda \rightarrow +\infty}d(\lambda)$. Hence, for any $c \in (0,\eta_1)$, $d^{-1}(c)$ is not empty. 
    And for any $c > \eta_1$, $d^{-1}(c)$ is empty. Then we can get the results in Theorem \ref{thm1.1} (2).

    \medskip

    Proof of (3): If $N = 2 , p > 6$, we have $\lim_{\lambda \to -\lambda_1} d(\lambda) = 0$, $\lim_{\lambda \to +\infty} d(\lambda) = 0$.
    Hence, there exists some $\hat{\lambda}$ such that $d(\hat{\lambda}) = \sup_{\lambda > 0}d(\lambda) = \max_{\lambda > 0}d(\lambda) > 0$ since $d(\lambda)$ is continuous. Set
    $\eta_2 = d(\hat{\lambda}) = \sup_{\lambda > 0}d(\lambda) > 0$, then for any $c \in (0,\eta_2)$, there exist $\lambda > \hat{\lambda}$ and $\widetilde{\lambda} \in (0,\hat{\lambda})$ such that $d(\lambda) = d(\widetilde{\lambda}) = c$,
    and for any $c > \eta_2$, $d^{-1}(c)$ is empty. This implies the results in Theorem \ref{thm1.1} (3).
    
\section{Orbital stability/instability results}

    \begin{lemma}\label{lem4.1} 
        $\{(\lambda,u_{\lambda}): \lambda > 0\}$ is a $C^{1}$ curve in $\mathbb{R} \times H_{0}^{1}(T)$.
    \end{lemma}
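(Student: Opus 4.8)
The plan is to invoke the implicit function theorem in $\mathbb{R} \times H_{0,rad}^{1}(T)$ applied to the map $F(\lambda, u) = u - (-\Delta + \lambda)^{-1}(u^{p-1})$ (or equivalently to the $C^1$ Nehari-type functional whose zeros are the solutions of \eqref{NLS}), exploiting the non-degeneracy of $u_\lambda$ that was recalled in the introduction. First I would fix $\lambda_0 > -\lambda_1$ and recall that, by the uniqueness and non-degeneracy results cited after \eqref{NLS}, the linearized operator $L_{\lambda_0} := -\Delta + \lambda_0 - (p-1)u_{\lambda_0}^{p-2}$ is an isomorphism from $H_{0,rad}^{1}(T)$ (or its $H^2 \cap H_0^1$ realization) onto its dual; here working in the radial subspace is essential, since in the full space $H_0^1(T)$ there may be additional (non-radial) kernel directions. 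Then $F$ is $C^1$ (indeed smooth, since $t \mapsto |t|^{p-2}t$ is $C^1$ for $p>2$ and the Sobolev embedding $H_{0,rad}^{1}(T) \hookrightarrow L^q$ is compact for all subcritical $q$), $F(\lambda_0, u_{\lambda_0}) = 0$, and $D_u F(\lambda_0, u_{\lambda_0})$ is invertible; the implicit function theorem then produces a $C^1$ branch $\lambda \mapsto u(\lambda)$ near $\lambda_0$. By the uniqueness of the radial positive solution, this local branch must coincide with $\{(\lambda, u_\lambda)\}$, so the global curve is $C^1$ at every point.

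The key steps in order: (i) set up the functional-analytic framework and identify the right equation $F(\lambda,u)=0$ whose solution set is exactly $\{(\lambda,u_\lambda)\}$; (ii) verify $F \in C^1$ using the subcriticality of $p$ and the Nemytskii-operator differentiability; (iii) verify $D_u F(\lambda_0, u_{\lambda_0})$ is an isomorphism — this is precisely the non-degeneracy statement quoted from \cite{yadava1997uniqueness,yao2019uniqueness,ni1985uniqueness,felmer2008uniqueness}, read off in $H_{0,rad}^{1}(T)$; (iv) apply the implicit function theorem to get a $C^1$ local branch; (v) use global uniqueness of the positive radial solution to glue these local branches into the single $C^1$ curve asserted, and note that $\lambda > 0$ is just a convenient sub-range of $\lambda > -\lambda_1$ on which everything already holds.

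The main obstacle is step (iii): one must be careful that the non-degeneracy is genuinely available \emph{in the radial class}, and that the quoted uniqueness/non-degeneracy theorems apply for the full relevant range of $\lambda$ (in particular for all $\lambda > 0$, and more generally $\lambda > -\lambda_1$, not merely for large $\lambda$). If any of those references only give non-degeneracy in $H^1_0$ of the whole annulus, that is even stronger and suffices; if they give it only in the radial class, that is exactly what is needed here since the NLS flow and the stability analysis in this section are carried out in $H_{0,rad}^{1}(T,\mathbb{C})$. A secondary, purely technical point is choosing function spaces so that $(-\Delta+\lambda)^{-1}$ maps into a space where the nonlinear term is $C^1$; the standard choice $H^2 \cap H_{0,rad}^1$ or the dual-space formulation on $H_{0,rad}^1$ handles this, and I would not belabor it. Everything else — continuity of the curve, which was already established via \cite[Corollary 2.5]{song2022properties}, and the consistency of the local branches — is routine.
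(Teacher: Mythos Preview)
Your proposal is correct and follows essentially the same approach as the paper: the paper's proof simply invokes the cited uniqueness and non-degeneracy results for $u_\lambda$ in $H_{0,rad}^{1}(T)$ and then applies the implicit function theorem (referring to \cite[Lemma 4.1]{song2022threshold} for the details), which is exactly the scheme you outline in steps (i)--(v). Your write-up is considerably more explicit about the functional-analytic setup and the gluing via global uniqueness, but there is no substantive difference in strategy.
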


    \begin{proof}
        By \cite[Theorem 1.2]{yadava1997uniqueness}, \cite[Theorem 1.1]{yao2019uniqueness}, \cite[Theorem 1.1]{ni1985uniqueness}, \cite[Theorem 1.1]{felmer2008uniqueness}, \eqref{NLS} possesses a unique and non-degenerate solution $u_{\lambda}$ 
        in $H_{0,rad}^{1}(T)$ for any fixed $\lambda > -\lambda_1$. Then similar to \cite[Lemma4.1]{song2022threshold}, we can prove that $\{(\lambda,u_{\lambda}): \lambda > 0\}$ is a $C^{1}$ curve in $\mathbb{R} \times H_{0}^{1}(T)$ by the implicit function theorem.
    \end{proof}

    To study the orbital stability, we use the following result, which is stated in \cite{song2022threshold}.

    \begin{proposition}\label{prop4.2} 
        Assume that $(LWP)$ holds. Then if $\frac{d}{d\lambda}\int_{T}u_{\lambda}^{2} > 0$ (respectively $< 0$), the standing wave $e^{i\lambda t}u_{\lambda}(x)$ is orbitally stable (respectively unstable) in $H_{0,rad}^{1}(T,\mathbb{C})$.
    \end{proposition}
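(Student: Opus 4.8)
The plan is to apply the Grillakis--Shatah--Strauss stability theory, specialised to the one-parameter phase-rotation group $\Psi\mapsto e^{i\theta}\Psi$ acting on $X:=H_{0,rad}^{1}(T,\mathbb{C})$. The conserved quantities of \eqref{NLSE} are the energy $E(\Psi)=\frac12\int_{T}|\nabla\Psi|^{2}-\frac1p\int_{T}|\Psi|^{p}$ and the charge $Q(\Psi)=\frac12\int_{T}|\Psi|^{2}$, and the standing wave $e^{i\lambda t}u_{\lambda}$ corresponds to the critical point $u_{\lambda}$ of the action $S_{\lambda}:=E+\lambda Q=\Phi_{\lambda}$; the hypothesis $(LWP)$ supplies the local Cauchy theory that the dynamical arguments need, with conservation of $E$ and $Q$ along the flow holding as usual. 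The scalar function to monitor is $d(\lambda):=S_{\lambda}(u_{\lambda})$. Since $\lambda\mapsto u_{\lambda}$ is $C^{1}$ by Lemma \ref{lem4.1} and $S_{\lambda}'(u_{\lambda})=0$, differentiating gives $d'(\lambda)=Q(u_{\lambda})=\frac12\int_{T}u_{\lambda}^{2}$ and hence $d''(\lambda)=\frac12\frac{d}{d\lambda}\int_{T}u_{\lambda}^{2}$, so the hypothesis of the proposition is precisely the convexity condition $d''(\lambda)>0$ (resp.\ the concavity condition $d''(\lambda)<0$).

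First I would check the spectral hypotheses of the abstract theory at $u_{\lambda}$. Writing a perturbation of the real function $u_{\lambda}$ as $\phi+i\psi$, the Hessian $S_{\lambda}''(u_{\lambda})$ splits on $X$ as $\langle L_{+}\phi,\phi\rangle+\langle L_{-}\psi,\psi\rangle$ with $L_{+}:=-\Delta+\lambda-(p-1)u_{\lambda}^{p-2}$ and $L_{-}:=-\Delta+\lambda-u_{\lambda}^{p-2}$ (Dirichlet data, acting on radial functions). From $(-\Delta+\lambda-u_{\lambda}^{p-2})u_{\lambda}=0$ and $u_{\lambda}>0$ in $T$ we see that $u_{\lambda}$ is the principal eigenfunction of $L_{-}$, so $L_{-}\geq 0$ with one-dimensional kernel spanned by $u_{\lambda}$, i.e.\ the full Hessian has kernel spanned by the symmetry generator $iu_{\lambda}$. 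Because $u_{\lambda}$ is the ground state of \eqref{NLS} it has Morse index one in $H_{0,rad}^{1}(T)$, so $L_{+}$ has exactly one negative eigenvalue, and the non-degeneracy of $u_{\lambda}$ recalled in the Introduction gives $\ker L_{+}=\{0\}$ on the radial space. Thus $S_{\lambda}''(u_{\lambda})$ has a single negative direction and kernel spanned by the symmetry generator --- the standard Grillakis--Shatah--Strauss configuration in which $d''(\lambda)$ is the deciding quantity.

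In the stable case $d''(\lambda)>0$ I would work in a tube around the orbit $\mathcal{O}=\{e^{i\theta}u_{\lambda}:\theta\in\mathbb{R}\}$, introduce a modulation $\theta(t)$ making the remainder orthogonal to $iu_{\lambda}$, and combine Step 1 with $d''>0$ to prove the coercivity estimate $\Lambda(\Psi)-\Lambda(u_{\lambda})\geq c_{0}\inf_{\theta}\|\Psi-e^{i\theta}u_{\lambda}\|_{H_{0}^{1}}^{2}$ for $\Psi\in X$ near $\mathcal{O}$ with $Q(\Psi)=Q(u_{\lambda})$, where $\Lambda:=E+\lambda Q$; conservation of $E$ and $Q$ then traps the radial solution near $\mathcal{O}$ for all time, and uniqueness in the radial Cauchy problem keeps it in $X$, yielding orbital stability in $X$. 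In the unstable case $d''(\lambda)<0$ I would follow the Shatah--Strauss instability scheme: the single negative direction of $S_{\lambda}''$, the non-degeneracy, and $d''<0$ allow one to build an auxiliary functional together with a direction of decrease, producing for radial data arbitrarily close to $u_{\lambda}$ a solution that leaves a fixed neighbourhood of $\mathcal{O}$ in finite time.

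The main obstacle is transplanting the Grillakis--Shatah--Strauss machinery to the bounded annulus in the radial class: since $(LWP)$ is only assumed, every dynamical statement and every use of the conservation laws must be phrased inside the provided local theory; the coercivity estimate requires precise control of $S_{\lambda}''(u_{\lambda})$ on the subspace of codimension two cut out by the charge constraint and the phase symmetry, which is exactly where the count ``one negative eigenvalue, trivial kernel'' and the convexity $d''>0$ are consumed; and the instability construction must be confined to $X$, which is automatic by uniqueness of the radial Cauchy problem. Since these steps are carried out essentially verbatim as in \cite[Section 4]{song2022threshold} (itself patterned on Grillakis--Shatah--Strauss and Shatah--Strauss), the proof reduces to checking that the annulus shares the structural facts used there --- a unique, non-degenerate, positive radial ground state and a $C^{1}$ branch $\lambda\mapsto u_{\lambda}$ --- which are furnished by the uniqueness/non-degeneracy results recalled in the Introduction and by Lemma \ref{lem4.1}.
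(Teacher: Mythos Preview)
The paper does not actually prove Proposition~\ref{prop4.2}; it simply quotes the result from \cite{song2022threshold} and uses it as a black box. Your proposal is therefore strictly more detailed than the paper's own treatment: you reconstruct the Grillakis--Shatah--Strauss argument that presumably underlies the cited result, verifying the spectral hypotheses ($L_{-}\geq 0$ with $\ker L_{-}=\mathrm{span}\{u_{\lambda}\}$, $L_{+}$ with exactly one negative eigenvalue and trivial kernel on the radial space by the ground-state and non-degeneracy properties) and identifying the condition $\frac{d}{d\lambda}\int_{T}u_{\lambda}^{2}\gtrless 0$ with the convexity/concavity of the action $d(\lambda)=S_{\lambda}(u_{\lambda})$. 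This is the correct and standard route, and you correctly note at the end that it reduces to the same structural inputs (uniqueness, non-degeneracy, $C^{1}$ branch) that the annulus shares with the exterior-ball setting of \cite{song2022threshold}. In short: your approach is right and matches what the cited reference does; the paper itself simply invokes that reference rather than repeating the argument.
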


    \noindent \textbf{Proof of Theorem \ref{thm1.2}:  } We will prove (1) and after similar arguments, we can prove (2), (3). 
    
    First, we assume that $p > 2$ if $N \geq 3$ or $2 < p < 6$ if $N = 2$. 
    Set $d(\lambda) = \int_{T}u_{\lambda}^{2}$. Note that $\lim_{\lambda \to -\lambda_1} d(\lambda) = 0$ , $\lim_{\lambda \to +\infty} d(\lambda) = +\infty$.
    Take $c \in (0,+\infty)$ a regular value of $d$. Assume that $d^{-1}(c) = \{\lambda_{c}^{(1)}, \lambda_{c}^{(2)}, \cdots\}$ with $0< \lambda_{c}^{(1)} < \cdots$, 
    then $d'(\lambda_{c}^{(i)}) \neq 0$, $i = 1, 2, \cdots$. Noticing that $d(\lambda) < d(\lambda_{c}^{(1)})$ in $(0,\lambda_{c}^{(1)})$, we have $d'(\lambda_{c}^{(1)}) > 0$. 
    Take $\lambda = \lambda_{c}^{(1)}$ then according to Proposition \eqref{prop4.2}, the standing wave $e^{i\lambda t}u_{\lambda}(x)$ is orbitally stable in $H_{0,rad}^{1}(T,\mathbb{C})$. 
    By Sard's Theorem, regular values of $d$ are almost every $c \in (0, +\infty)$ and thus we complete the proof of (1).

    \bigskip

    \noindent \textbf{Funding:} No funding was received for conducting this study.

    \bigskip

    \noindent \textbf{Author Contribution:} Jian Liang wrote the manuscript's main text and Linjie Song gave many useful ideas, suggestions and information about this manuscript. All authors reviewed the manuscript.

    \bigskip

    \noindent \textbf{Conflict of Interest:} The authors declare that they have no conflict of interests.

    \bigskip

    \noindent \textbf{Data Availability Statement:} My manuscript has no associate data.

    \bibliography{Reference}

\end{document}